\documentclass[12pt]{amsart}
\usepackage{latexsym}
\usepackage{amsfonts}
\usepackage{amsmath, amssymb, amsthm,amscd,epsfig}
\usepackage{tikz}
\usepackage{mathrsfs}
\usetikzlibrary{arrows,backgrounds,decorations}

\setlength{\textwidth}{6.5in}
\setlength{\textheight}{8.6in}
\setlength{\oddsidemargin}{0in}
\setlength{\evensidemargin}{0in}

\newcommand{\st}{\, \mid \,}

\newcommand{\etale}{$\acute{\textrm{e}}$tale }

\newcommand{\bh}{\mathcal{B}(\mathcal{H})}

\newcommand{\h}{\mathcal{H}}

\newcommand{\dom}{Domain}
\newcommand{\sor}{Source}

\newcommand{\ep}{\varepsilon}

\newcommand{\tr}{\textrm{Tr}}
\newcommand{\gs}{G^s(X,\varphi,Q)}

\newcommand{\D}{\mathfrak{D}}

\tikzstyle{vertex}=[circle,draw,fill=gray!20,thick]
\tikzstyle{goto}=[->,shorten >=1pt,>=stealth,semithick]

\theoremstyle{plain}
\newtheorem{theorem}{Theorem}[section]
\newtheorem{lemma}[theorem]{Lemma}
\newtheorem{proposition}[theorem]{Proposition}

\newtheorem{w-vn}[theorem]{Weyl - von Neumann Theorem}

\theoremstyle{definition}
\newtheorem{definition}[theorem]{Definition}

\theoremstyle{remark}

\newtheorem*{notation}{Notation}

\newtheorem*{Awknowledgements}{Awknowledgements}

\begin{document}

\date{\today}
\title[Spectral triples for hyperbolic dynamical systems]{Spectral triples for hyperbolic dynamical systems}
\author[Michael F. Whittaker]{Michael F. Whittaker}
\address{MICHAEL F. WHITTAKER, School of Mathematics, University of Wollongong, Australia}
\email{mfwhittaker@gmail.com}
\thanks{Research supported in part by ARC grant 228-37-1021, Australia}

\subjclass[2010]{Primary {47C15}; Secondary {37D20,47B25}}

\begin{abstract}
Spectral triples are defined for $C^*$-algebras associated with hyperbolic dynamical systems known as Smale spaces. The spectral dimension of one of these spectral triples is shown to recover the topological entropy of the Smale space.
\end{abstract}

\maketitle

\section{Introduction}

A spectral triple $(A,\h,D)$ consists of a faithful representation of a $C^*$-algebra $A$ as bounded operators on a separable Hilbert space $\h$ along with a self-adjoint, unbounded operator on $\h$ satisfying the additional conditions
\begin{enumerate}
\item the set $\{a \in A | [D,a] \in \bh\}$ is norm dense in $A$ and
\item the operator $a(1+D^2)^{-1}$ is a compact operator on $\h$ for all $a$ in $A$.
\end{enumerate}
Alain Connes developed spectral triples as a generalization of a Fredholm module which puts the spectrum of an unbounded, self-adjoint operator at the forefront \cite{Con1}. Using spectral triples Connes was able to recover geometric data from commutative algebras in a framework that extended to the noncommutative case \cite{Con1,Con2}. By now spectral triples are at the forefront in Connes' noncommutative geometry and play a key role in the noncommutative analogue of the calculus. In this paper, we investigate spectral triples for $C^*$-algebras associated with hyperbolic dynamical systems known as Smale spaces. These are the $C^*$-algebras introduced by David Ruelle in his investigation of Gibbs states associated with hyperbolic diffeomorphisms, which are of the type introduced by Connes in connection with foliations \cite{Rue2}.

Let us begin with a heuristic definition of a Smale Space. Suppose $(X,d)$ is a compact metric space and $\varphi:X \rightarrow X$ is a homeomorphism. We say $(X,d,\varphi)$ is a Smale Space if $X$ is locally a hyperbolic product space with respect to $\varphi$; that is, there is a global constant $\ep_X > 0$ such that if $x$ is in $X$ we have two sets $X^s(x,\ep_X)$ and $X^u(x,\ep_X)$ whose intersection is $\{x\}$ and the Cartesian product of these sets is homeomorphic to a neighborhood of $x$. Moreover, for any points $y$ and $z$ in $X^s(x,\ep_X)$ we require that $d(\varphi(y),\varphi(z)) < \lambda^{-1} d(y,z)$ where $\lambda > 1$ is globally defined. Similarly, $X^u(x,\ep_X)$ has the same property if we replace $\varphi$ with $\varphi^{-1}$. We call $X^s(x,\ep_X)$ and $X^u(x,\ep_X)$ the local stable and unstable sets of $x$ respectively.

David Ruelle introduced Smale spaces as a purely topological description of the basic sets of Axiom $A$ diffeomorphisms on a compact manifold \cite{Rue1}. A basic set is a closed, $\varphi$-invariant subset of the manifold but does not need to be a manifold itself. In fact, these sets are usually fractal and have no smooth structure whatsoever. We note that, under mild conditions, Smale spaces are chaotic dynamical systems. We also remark that examples of Smale spaces include shifts of finite type, solenoids, the dynamical systems associated with certain substitution tilings, and hyperbolic toral automorphisms.

Several $C^*$-algebras can be associated with a Smale space. The first algebra we wish to study is the $C^*$-algebra associated with the stable equivalence relation. To do so, it is most convenient to find a transversal, so that it becomes an \'{e}tale equivalence relation. Natural transversals are available as the unstable equivalence classes, but care must be used when defining a suitable topology on the groupoid of stable equivalence restricted to the unstable transversal. The situation is simplified when the transversal is $\varphi$-invariant, so the transversal is defined to be the unstable equivalence classes of a $\varphi$-invariant set of periodic points. A groupoid $C^*$-algebra is produced which first appeared in \cite{PS} and is strongly Morita equivalent to the stable $C^*$-algebra appearing in \cite{Put1}. The unstable $C^*$-algebra is constructed in an analogous fashion. Furthermore, the homeomorphism $\varphi$ gives rise to an automorphism on both the stable and unstable algebras and the crossed products are known as the stable and unstable Ruelle algebras \cite{Put1}. We shall define spectral triples on all of these $C^*$-algebras.

To define a spectral triple we begin by considering specific classes of the stable (unstable) equivalence relation. In our situation, each equivalence class can be associated with a point in the orbit of a periodic point. These periodic orbits are viewed as attractors in the sense that, given $\ep > 0$ and any point $x$ in an equivalence class associated with a periodic point, there is an integer $N$ such that the distance between the periodic orbit and $\varphi^n(x)$ is within $\ep$ for all $n \geq N$. A similar result is true on the unstable equivalence relation provided we replace $\varphi$ with $\varphi^{-1}$. A function is defined on the equivalence classes of an orbit which essentially counts the number of iterations of $\varphi$ required to move each point into a fixed neighbourhood of the orbit of the associated periodic point. Moreover, if the point begins in this fixed neighbourhood then the function will count the number of inverse iterations required to remove the point from the neighbourhood. Using this function, we define a Dirac operator $D$, which gives rise to a spectral triple on the stable algebra. A similar construction defines a spectral triple on the unstable algebra. Furthermore, the Dirac operator $D$ commutes with the automorphism used to define the crossed product Ruelle algebras and therefore the spectral triple defined extends to the Ruelle algebras as well. All of these spectral triples turn out to be $\theta$-summable; that is, the operator $e^{-(1+D^2)}$ is trace class.

A much more desirable property for spectral triples is finite summability. A spectral triple $(A,\h,\D)$ is finitely summable when the operator $(1+\D^2)^{-p/2}$ is trace class for some $p \in \mathbb{R}$. The infimum over all such $p \in \mathbb{R}$ is called the spectral dimension of the spectral triple. Defining a new Dirac operator by $\D = \lambda^D$, where $\lambda >1$ is the local expansive constant of the Smale space and $D$ is the aforementioned Dirac operator, we obtain a finitely summable spectral triple provided we make certain assumptions on the function used to define $D$. We note that this spectral triple does not extend to the Ruelle algebras.

\begin{Awknowledgements}
Great acclamation is due to Ian Putnam who supervised my work during my doctoral studies, from which this note is based.
\end{Awknowledgements}

\section{Smale Spaces} \label{Smale_Spaces}

In the introduction, we gave a heuristic definition of a Smale space and in this section we comment on how to make this definition rigorous, as well as discussing properties required in the sequel. The reader is encouraged to reference \cite{Put1} and \cite{Rue1} for additional details on these remarkable spaces.

To make the definition of a Smale space rigorous requires us to postulate the existence of constants $\ep_X>0$ and $\lambda>1$ as well as a map, called the bracket, satisfying the axioms found in \cite{Put1,Rue1}. The constant $\ep_X>0$ gives specific meaning to the term `local' used in the sequel and $\lambda>1$ is the expansive constant of the Smale space. The idea of the bracket is to encode the local product structure; if $d(x,y) < \ep_X$, then $\{[x,y]\} = X^s(x,\ep_X) \cap X^u(y,\ep_X)$.

The local stable and unstable sets of a point $x$ in $X$ are now defined by
\begin{eqnarray}
\nonumber
& X^s(x,\ep) & =  \{ y \in X | d(x,y) < \ep \textrm{ and } [y,x]=x \} \textrm{ and } \\
\nonumber
& X^u(x,\ep) & = \{ y \in X | d(x,y) < \ep \textrm{ and } [x,y]=x \}
\end{eqnarray}
where $0 \leq \ep \leq \ep_X$. Figure \ref{bracket_fig} illustrates the bracket with respect to these sets.

\begin{figure}[htb]
\begin{center}
\begin{tikzpicture}
\tikzstyle{axes}=[]
\begin{scope}[style=axes]
	\draw[<->] (-3,-1) node[left] {$X^s(x,\ep_X)$} -- (1,-1);
	\draw[<->] (-1,-3) -- (-1,1) node[above] {$X^u(x,\ep_X)$};
	\node at (-1.2,-1.4) {$x$};
	\node at (1.1,-1.4) {$[x,y]$};
	\pgfpathcircle{\pgfpoint{-1cm}{-1cm}} {2pt};
	\pgfpathcircle{\pgfpoint{0.5cm}{-1cm}} {2pt};
	\pgfusepath{fill}
\end{scope}
\begin{scope}[style=axes]
	\draw[<->] (-1.5,0.5) -- (2.5,0.5) node[right] {$X^s(y,\ep_X)$};
	\draw[<->] (0.5,-1.5) -- (0.5,2.5) node[above] {$X^u(y,\ep_X)$};
	\node at (0.7,0.2) {$y$};
	\node at (-1.6,0.2) {$[y,x]$};
	\pgfpathcircle{\pgfpoint{0.5cm}{0.5cm}} {2pt};
	\pgfpathcircle{\pgfpoint{-1cm}{0.5cm}} {2pt};
	\pgfusepath{fill}
\end{scope}
\end{tikzpicture}
\caption{The bracket map}
\label{bracket_fig}
\end{center}
\end{figure}
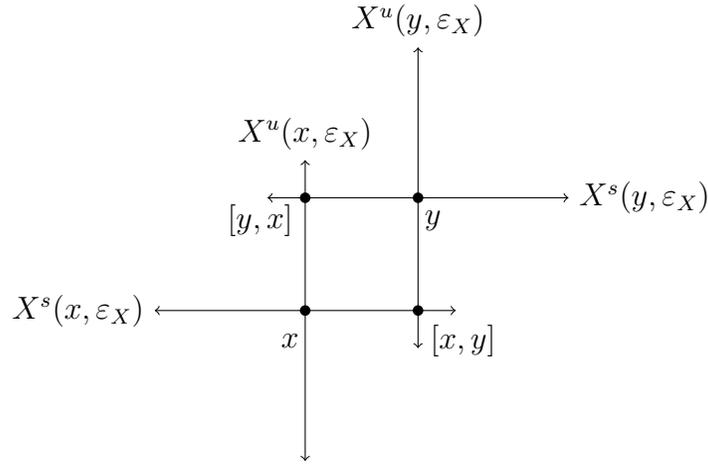

\begin{definition} \label{Smale space}
A dynamical system $(X,d,\varphi)$ having a bracket map is a {\em Smale space}. Moreover, a Smale space is said to be {\em irreducible} if the set of periodic points under $\varphi$ are dense and there is a dense $\varphi$-orbit.
\end{definition}

There are canonical global stable and unstable equivalence relations on $X$. Given a point $x$ in $X$ we define the stable and unstable equivalence classes of $x$ by
\begin{eqnarray}
\nonumber
X^s(x) & = & \{y \in X | \lim_{n \rightarrow +\infty} d(\varphi^n(x),\varphi^n(y)) = 0\}, \\
\nonumber
X^u(x) & = & \{y \in X | \lim_{n \rightarrow +\infty} d(\varphi^{-n}(x),\varphi^{-n}(y)) = 0\}.
\end{eqnarray}
We shall also employ the notation $x \sim_s y$ if $y$ is in $X^s(x)$ and $x \sim_u y$ if $y$ is in $X^u(x)$. To see the connection between the global stable and local stable set of a point, we note that, for any $x$ in $X$ and $\ep >0$, we have $X^s(x,\ep) \subset X^s(x)$. Furthermore, a point $y$ is in $X^s(x)$ if and only if there exists $N \geq 0$ such that $\varphi^n(y)$ is in $X^s(\varphi^n(x),\ep)$ for all $n \geq N$. This nontrivial fact follows from the expansive nature of $\varphi$ in the unstable direction and is most easily observed when $x$ is a fixed point. Indeed, since $y \in X^s(x)$, there exists $N \in \mathbb{N}$ such that $d(\varphi^n(y),x) < \ep$ for all $n \geq N$. Suppose that $\varphi^{n_0}(y)$ is not in the local stable set of $x$ for some $n_0 \geq N$; that is, $[\varphi^{n_0}(y),x] \neq x$. By the definition of the bracket $[\varphi^{n_0}(y),x] \in X^u(x,\ep)$ and it follows that we can find $m$ such that $d(\varphi^m[\varphi^{n_0}(y),x],x) > \ep$ and consequently that $d(\varphi^{m+n_0}(y),x) > \ep$, a contradiction. A slightly more complex argument holds when $x$ is not a fixed point. Making the obvious modifications, the same is true in the unstable situation.

As topological spaces the stable and unstable equivalence classes are quite unseemly with respect to the relative topology of $X$. In fact, if $(X,d,\varphi)$ is irreducible it follows that both the stable and unstable equivalence classes of orbits are dense in $X$ \cite{Rue1}. To rectify this situation we observe that the local stable sets form a neighborhood base for a topology on the global stable sets; that is, given an equivalence class $X^s(x)$, the collection $\{ X^s(y,\delta) | y \in X^s(x) \textrm{ and } \delta >0\}$ is a neighbourhood base for a Hausdorff and locally compact topology on $X^s(x)$. We define a topology on the unstable equivalence classes in an analogous fashion.

\section{$C^*$-algebras of Smale Spaces} 
\label{Algebras}

In this Section we will construct $C^*$-algebras from an irreducible Smale space. These $C^*$-algebras are referred to as the stable and unstable algebras. In \cite{Rue2}, David Ruelle constructed $C^*$-algebras from the stable and unstable equivalence relations. Putnam and Spielberg then refined these constructions in \cite{PS} and defined groupoids that are equivalent, in the sense of Muhly, Renault, and Williams \cite{MRW}, to the stable and unstable groupoids, but which are \'{e}tale. We follow the development in \cite{PS} and the reader is referred there for further properties of these algebras.

The astute reader will have noticed that exchanging the homeomorphism $\varphi$ with $\varphi^{-1}$ interchanges the stable and unstable equivalence relations. This phenomenon persists at the level of the stable and unstable $C^*$-algebras as well. For this reason we omit any discussion of the unstable $C^*$-algebra since we can define the unstable algebra to be the stable algebra of the Smale space with $\varphi$ exchanged with $\varphi^{-1}$.

\subsection{\'{E}tale groupoids on Smale Spaces}

Let $(X,d,\varphi)$ be a Smale space and let $P$ and $Q$ be finite sets of $\varphi$-invariant periodic points. At this point we make no restrictions on the sets $P$ and $Q$, however, in the following section we will add the assumption that $P$ and $Q$ are disjoint. Define
\[ X^s(P) = \bigcup_{p \in P} X^s(p) \quad , \quad X^u(Q) = \bigcup_{q \in Q} X^u(q) \quad , \quad X^h(P,Q) = X^s(P) \cap X^u(Q). \]

\begin{lemma}[\cite{Rue1}] \label{P_Q_dense}
If $(X,d,\varphi)$ is an irreducible Smale space and $P$ and $Q$ are both $\varphi$-invariant sets of periodic points, then $X^h(P,Q)$ is dense in $X$. Moreover, if $P \cap Q = \emptyset$, then $X^h(P,Q)$ does not contain any periodic points.
\end{lemma}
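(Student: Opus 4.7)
The plan is to handle density and the periodic-point claim separately, with the bracket map doing all the real work in the first part.

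For density, fix $x \in X$ and $\ep > 0$. The strategy is to approximate $x$ by a point in $X^s(p)$ for some $p \in P$, approximate $x$ by a point in $X^u(q)$ for some $q \in Q$, and then glue these two points together via the bracket $[\cdot,\cdot]$. Concretely, irreducibility ensures that $X^s(p)$ and $X^u(q)$ are each dense in $X$ (this is the statement cited in the paragraph just before Definition \ref{Smale space}). So I would pick $\delta > 0$ much smaller than both $\ep_X$ and $\ep$ (small enough that continuity of the bracket at $(x,x)$ — noting $[x,x]=x$ — forces $[y,z]$ to lie within $\ep$ of $x$ whenever $d(x,y),d(x,z)<\delta$), and then choose $y \in X^s(p)$ and $z \in X^u(q)$ with $d(x,y), d(x,z) < \delta$. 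Since $d(y,z) < 2\delta < \ep_X$, the bracket $[y,z]$ is defined, and it lies in $X^s(y,\ep_X) \cap X^u(z,\ep_X)$. Using $X^s(y,\ep_X) \subset X^s(y) = X^s(p)$ and $X^u(z,\ep_X) \subset X^u(z) = X^u(q)$, I conclude $[y,z] \in X^s(P) \cap X^u(Q) = X^h(P,Q)$, and by the continuity of the bracket this point lies within $\ep$ of $x$.

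For the second statement, suppose $w \in X^h(P,Q)$ is periodic under $\varphi$. Then $w \in X^s(p)$ for some $p \in P$, i.e., $d(\varphi^n(w),\varphi^n(p)) \to 0$ as $n \to +\infty$. But $w$ and $p$ are both periodic, so the sequence $\{(\varphi^n(w),\varphi^n(p))\}_{n \geq 0}$ takes only finitely many values; a sequence in a finite set can converge to $0$ only if it is eventually $0$, so $\varphi^n(w) = \varphi^n(p)$ for all sufficiently large $n$, and since $\varphi$ is a bijection, $w = p \in P$. Running the same argument with $\varphi^{-1}$ on $X^u(q)$ gives $w = q \in Q$. Hence $w \in P \cap Q$, contradicting $P \cap Q = \emptyset$.

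The main subtlety is the continuity-of-the-bracket step in the first paragraph: one needs to know that $[y,z] \to x$ as $y,z \to x$, which follows from the fact that the bracket is continuous on the set $\{(a,b) : d(a,b) < \ep_X\}$ and that $[x,x] = x$ (the latter being immediate from $X^s(x,\ep_X) \cap X^u(x,\ep_X) = \{x\}$). Everything else is bookkeeping using the inclusions $X^s(y,\ep_X) \subset X^s(y)$ and $X^u(z,\ep_X) \subset X^u(z)$ mentioned just after the definitions of the global equivalence classes.
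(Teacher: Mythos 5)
The paper gives no proof of this lemma---it is quoted directly from Ruelle \cite{Rue1}---so there is nothing internal to compare against; your argument is correct and is the standard one (density via bracketing a nearby stable point with a nearby unstable point, plus the observation that two asymptotic periodic points must coincide). The one point to watch is in the density step: for an irreducible but non-mixing Smale space it is the stable class of the \emph{orbit} of $p$ that is dense, not $X^s(p)$ for a single fixed $p$; since $P$ is assumed $\varphi$-invariant, what you actually need and use is density of $X^s(P)$ (and $X^u(Q)$), so the argument goes through as written.
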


We now define a groupoid on $(X,d,\varphi)$ by
\begin{eqnarray}
\nonumber
& \gs & = \{(v,w) | v \sim_s w \textrm{ and } v,w \in X^u(Q) \}.
\end{eqnarray}
We remark that $\gs$ is a closed transversal to stable equivalence on $(X,d,\varphi)$ in the sense of Muhly, Renault, and Williams \cite{MRW}.

We aim to define an \etale topology $\gs$. Suppose $v \sim_s w$ and $v,w \in X^u(Q)$. Since $v \sim_s w$ it follows that there exists $N$ such that 
\[ \varphi^N(w) \in X^s(\varphi^N(v),\ep_X/2), \]
see Section \ref{Smale_Spaces}. By the continuity of $\varphi$, define $0 < \delta < \ep_X/2$ so that
\[ \varphi^n(X^u(w,\delta)) \subset X^u(\varphi^n(w),\ep_X/2) \qquad \textrm{for all } 0 \leq n \leq N. \]
Given $N,\delta$, we may now define a map $h^s$ on $X^u(w,\delta)$ via
\[ h^s(x) = \varphi^{-N}[\varphi^N(x),\varphi^N(v)]. \]
Let $\delta^\prime = \sup \{d(v,h^s(x)) \mid x \in X^u(w,\delta)\}$. It is shown in \cite{Put1} that the map $h^s:X^u(w,\delta) \to X^u(v,\delta^\prime)$ is a local homeomorphism. An illustration of the map $h^s$ is given in Figure \ref{fig:local_homeo}.
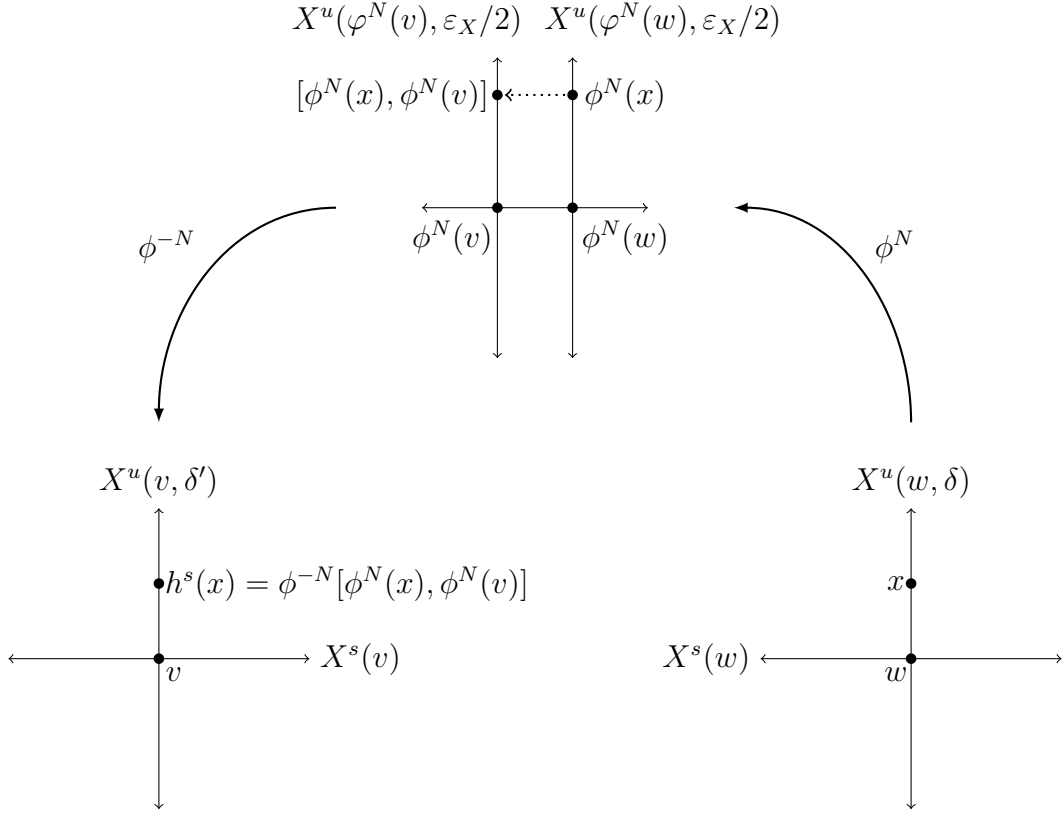
\begin{figure}[htb]
\begin{center}
\begin{tikzpicture}
\tikzstyle{axes}=[]
\begin{scope}[style=axes]
	\draw[<->] (3,0) node[left] {$X^s(w)$} -- (7,0);
	\draw[<->] (5,-2) -- (5,2) node[above] {$X^u(w,\delta)$};
	\node at (4.8,-0.2) {$w$};
	\node at (4.8,1) {$x$};
	\pgfpathcircle{\pgfpoint{5cm}{0cm}} {2pt};
	\pgfpathcircle{\pgfpoint{5cm}{1cm}} {2pt};
	\pgfusepath{fill}
\end{scope}
\begin{scope}[style=axes]
	\draw[<->] (-7,0) -- (-3,0) node[right] {$X^s(v)$};
	\draw[<->] (-5,-2) -- (-5,2) node[above] {$X^u(v,\delta^\prime)$};
	\node at (-4.8,-0.2) {$v$};
	\node at (-2.5,1) {$h^s(x)=\phi^{-N}[\phi^N(x),\phi^N(v)]$};
	\pgfpathcircle{\pgfpoint{-5cm}{0cm}} {2pt};
	\pgfpathcircle{\pgfpoint{-5cm}{1cm}} {2pt};
	\pgfusepath{fill}
\end{scope}
\begin{scope}[style=axes]
	\draw[<->] (-1.5,6) -- (1.5,6) node[right] {$$};
	\draw[<->] (-.5,4) -- (-.5,8) node[above] {$$};
	\draw[<->] (.5,4) -- (.5,8) node[above] {$$};
	\node at (1.7,8.5) {$X^u(\varphi^N(w),\ep_X/2)$};
	\node at (-1.7,8.5) {$X^u(\varphi^N(v),\ep_X/2)$};
	\node at (1.2,5.6) {$\phi^N(w)$};
	\node at (1.2,7.5) {$\phi^N(x)$};
	\node at (-1.1,5.6) {$\phi^N(v)$};
	\node at (-1.9,7.5) {$[\phi^N(x),\phi^N(v)]$};
	\pgfpathcircle{\pgfpoint{-.5cm}{6cm}} {2pt};
	\pgfpathcircle{\pgfpoint{.5cm}{6cm}} {2pt};
	\pgfpathcircle{\pgfpoint{-.5cm}{7.5cm}} {2pt};
	\pgfpathcircle{\pgfpoint{.5cm}{7.5cm}} {2pt};
	\pgfusepath{fill}
	\draw[->,thick,dotted] (0.4,7.5) -- (-0.4,7.5);
\end{scope}
\node (anchor1) at ( 2.5,6) {};
\node (anchor2) at ( 5,3) {}
	edge [->,>=latex,out=90,in=0,thick] node[auto,swap]{$\phi^N$} (anchor1);
\node (anchor3) at ( -2.5,6) {};
\node (anchor4) at ( -5,3) {}
	edge [<-,>=latex,out=90,in=180,thick] node[auto]{$\phi^{-N}$} (anchor3);
\end{tikzpicture}
\caption{The local homeomorphism $h^s: X^u(w,\delta) \rightarrow X^u(v,\delta^\prime)$}
\label{fig:local_homeo}
\end{center}
\end{figure}

\begin{lemma}[\cite{Put1}]\label{local homeo}
Let $v,w$ in $X$ be such that $v \sim_s w$ and $v,w \in X^u(Q)$. There exists $0 < \delta, \delta^\prime \leq \ep_X/2$ and an integer $N$ such that the map $h^s: X^u(w,\delta) \rightarrow X^u(v,\delta^\prime)$ is a local homeomorphism.
\end{lemma}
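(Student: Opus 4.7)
The plan is to check in turn that the formula for $h^s$ is well-defined on $X^u(w,\delta)$, takes values in $X^u(v,\delta)$, is continuous, and admits a continuous local inverse given by the symmetric formula $g^s(y) = \varphi^{-N}[\varphi^N(y), \varphi^N(w)]$. The choices of $N$ and $\delta$ are essentially forced by the setup already recorded in the excerpt.

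First I would pin down $N$ and $\delta$. Because $v \sim_s w$, we have $d(\varphi^n(v),\varphi^n(w)) \to 0$, so there exists $N$ with $\varphi^N(w) \in X^s(\varphi^N(v),\ep_X/2)$. Continuity of $\varphi^n$ for $0 \leq n \leq N$ then allows us to choose $0 < \delta \leq \ep_X/2$ so that $\varphi^n(X^u(w,\delta)) \subset X^u(\varphi^n(w),\ep_X/2)$ and $\varphi^n(X^u(v,\delta)) \subset X^u(\varphi^n(v),\ep_X/2)$ for all such $n$, exactly as displayed before the lemma. If necessary, I would shrink $\delta$ further so that $\lambda^{-N}\ep_X \leq \delta$.

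Next, I would verify the bracket in the definition of $h^s$ makes sense and has image in the correct set. For $x \in X^u(w,\delta)$, the triangle inequality applied to $\varphi^N(x) \in X^u(\varphi^N(w),\ep_X/2)$ and $\varphi^N(w) \in X^s(\varphi^N(v),\ep_X/2)$ gives $d(\varphi^N(x),\varphi^N(v)) < \ep_X$, so $[\varphi^N(x),\varphi^N(v)]$ is defined and lies in $X^s(\varphi^N(x),\ep_X) \cap X^u(\varphi^N(v),\ep_X)$. Applying $\varphi^{-N}$ and using that $\varphi^{-1}$ contracts distances by $\lambda^{-1}$ on local unstable sets yields $h^s(x) \in X^u(v,\lambda^{-N}\ep_X) \subset X^u(v,\delta)$. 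Continuity of $h^s$ is then immediate from continuity of $\varphi^{\pm N}$ and of the bracket on its domain of definition.

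The final and most delicate step is to produce a local inverse. I would define $g^s(y) = \varphi^{-N}[\varphi^N(y),\varphi^N(w)]$ on a possibly smaller neighborhood $X^u(v,\delta')$; the same estimates (with the roles of $v,w$ interchanged) show $g^s$ is well-defined, continuous, and takes values in $X^u(w,\delta)$. To compute $g^s \circ h^s$, set $y = h^s(x)$, so $\varphi^N(y) = [\varphi^N(x),\varphi^N(v)]$; then the bracket identity $[[a,b],c] = [a,c]$ (valid when the relevant distances are at most $\ep_X$) gives
\[
[\varphi^N(y),\varphi^N(w)] = [[\varphi^N(x),\varphi^N(v)],\varphi^N(w)] = [\varphi^N(x),\varphi^N(w)].
\]
Since $\varphi^N(x) \in X^u(\varphi^N(w),\ep_X/2)$, this bracket equals $\varphi^N(x)$, and applying $\varphi^{-N}$ gives $g^s(h^s(x)) = x$. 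The reverse composition is symmetric. Hence $h^s$ restricts to a homeomorphism on each sufficiently small neighborhood of any point of $X^u(w,\delta)$, i.e., it is a local homeomorphism.

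The main obstacle is the bookkeeping of which local stable and unstable sets each intermediate point lives in, so that every invocation of the bracket is legitimate and the bracket identity $[[a,b],c]=[a,c]$ applies; once that is set up carefully, the rest is direct application of continuity and the contraction axiom.
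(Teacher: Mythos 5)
The paper gives no proof of this lemma at all --- it is quoted from Putnam's \cite{Put1} --- so there is no in-text argument to compare against; your proof is essentially the standard one from that reference: well-definedness of the bracket after pushing forward by $\varphi^N$, contraction of $\varphi^{-N}$ along local unstable sets, and the identity $[[a,b],c]=[a,c]$ exhibiting the symmetric formula $g^s$ as a two-sided inverse. Two small points of bookkeeping. First, you say you would ``shrink $\delta$ further so that $\lambda^{-N}\ep_X \leq \delta$,'' but shrinking $\delta$ makes that inequality \emph{harder} to satisfy, and $\delta$ cannot be enlarged independently of $N$ because the containments $\varphi^n(X^u(w,\delta)) \subset X^u(\varphi^n(w),\ep_X/2)$ bound it from above; your estimate honestly yields only $h^s(x) \in X^u(v,\lambda^{-N}\ep_X)$, which need not sit inside $X^u(v,\delta)$. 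This does not damage the local-homeomorphism conclusion, and the paper's Theorem \ref{stable nbhd} absorbs exactly this issue by defining $V^s(v,w,h^s,\delta)$ only over those $x$ with $h^s(x) \in X^u(v,\delta)$. Second, for the composite bracket $[[\varphi^N(x),\varphi^N(v)],\varphi^N(w)]$ to be legitimate, the relevant points must be within the distance where the bracket is defined; your $\ep_X/2$ margins only give a bound of $3\ep_X/2$ on $d([\varphi^N(x),\varphi^N(v)],\varphi^N(w))$, so one should run the argument with $\ep_X/4$ or appeal to the usual convention that $\ep_X$ is chosen small relative to the domain of the bracket. You flag this yourself, and it is routine; with those adjustments the argument is correct.
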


\begin{theorem}[\cite{Put1}]\label{stable nbhd}
Let $v,w$ in $X$ be such that $v \sim_s w$ and $v,w \in X^u(Q)$ and let $N$, $\delta$, $\delta^\prime$, and $h^s$ be defined by lemma \ref{local homeo}. The collection of sets
\[ V^s(v,w,h^s,\delta) = \{ (h^s(x),x) | x \in X^u(w,\delta), h^s(x) \in X^u(v,\delta^\prime)\} \]
form a neighbourhood base for a topology on $\gs$. In this topology, the range and source maps take each element in the neighbourhood base homeomorphically to an open set in $X^u(Q)$. Moreover, this topology makes $\gs$ a second countable, locally compact, Hausdorff groupoid. That is, $\gs$ is an \etale groupoid. 
\end{theorem}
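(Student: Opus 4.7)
The plan is to verify the two axioms for a neighborhood base, then derive each of the listed topological properties in turn. To set up, note that by Lemma \ref{local homeo} every pair $(v,w) \in \gs$ has a well-defined basic set $\vs$, and $(v,w) = (h^s(w),w) \in \vs$, so the basis covers $\gs$. For the overlap axiom, suppose $(x,y) \in V^s(v_1,w_1,h_1^s,\delta_1) \cap V^s(v_2,w_2,h_2^s,\delta_2)$, so that $x = h_1^s(y) = h_2^s(y)$. Both $h_1^s$ and $h_2^s$ are defined using the bracket and sufficiently large powers of $\varphi$; I would observe that replacing $N$ by $\max(N_1,N_2)$ makes both formulas agree where defined, and then by continuity of $\varphi$, $\varphi^{-1}$, and the bracket, there is $0 < \delta_3 \le \min(\delta_1,\delta_2)$ so that $X^u(y,\delta_3) \subset X^u(w_1,\delta_1) \cap X^u(w_2,\delta_2)$, $h_1^s = h_2^s$ on $X^u(y,\delta_3)$, and the common image lies in $X^u(v_1,\delta_1) \cap X^u(v_2,\delta_2)$. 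Taking $h_3^s$ to be this common map then produces a basic neighborhood $V^s(x,y,h_3^s,\delta_3)$ containing $(x,y)$ and contained in the intersection.

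Next I would read off the properties of the range and source maps. The source map on $\vs$ is just $(h^s(x),x) \mapsto x$, which is by construction a bijection onto $X^u(w,\delta)$, and the range map is $(h^s(x),x) \mapsto h^s(x)$, which by Lemma \ref{local homeo} is a local homeomorphism onto an open subset of $X^u(v,\delta) \subset X^u(Q)$. Declaring the source map to be a homeomorphism on each basis element gives $\vs$ the topology pulled back from $X^u(w,\delta)$, and this is consistent on overlaps by the argument above. Local compactness of $\gs$ then follows immediately from local compactness of the local unstable sets, since inside any $\vs$ one may shrink $\delta$ and choose a compact neighborhood of a point in $X^u(w,\delta)$.

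For the Hausdorff property, consider distinct $(x_1,y_1), (x_2,y_2) \in \gs$. If $y_1 \ne y_2$, choose disjoint open neighborhoods of $y_1,y_2$ inside $X^u(Q)$ with its own topology and pull them back through the source map on basic sets around each point. If $y_1 = y_2$ but $x_1 \ne x_2$, use the range map analogously, shrinking $\delta$ first so that the images $h_i^s$ land in disjoint open subsets of $X^u(Q)$; this uses that $X$ is Hausdorff together with the continuity of $h^s$. Second countability follows because $Q$ is finite, each $X^u(q)$ is second countable (a nested union of local unstable sets of a fixed base of points in a separable metric space), and one may restrict to a countable subfamily of basic sets $\vs$ with $v,w$ ranging over a countable dense subset of $X^u(Q)$ and $\delta$ rational; the overlap argument above shows this refinement is still a base.

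The main obstacle, and the place I would spend most of the work, is the overlap axiom: showing that two bracket-defined local homeomorphisms $h_1^s, h_2^s$ that happen to agree at one point $y$ must agree on a whole unstable neighborhood of $y$, with matching ranges. The subtlety is that the defining integers $N_1,N_2$ and bracket operations differ, so one must argue that after pushing forward by a common $\varphi^N$ and applying continuity of the bracket (which is only defined for pairs at distance less than $\varepsilon_X$), both recipes compute the same point. Once that equality of germs is established the rest is routine.
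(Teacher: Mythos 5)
The paper itself gives no proof of this statement; it is quoted from \cite{Put1}, so your outline has to be measured against the argument there. Your overall route --- verify the two base axioms, then read off local compactness, Hausdorffness and second countability from the behaviour of the source and range maps on basic sets --- is the standard one and matches Putnam's. The covering axiom, the Hausdorff separation via disjoint local unstable neighbourhoods of the ranges, the local compactness from pre-compactness of small local unstable sets, and second countability from separability of the compact metric space $X$ are all fine as sketched.

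The one substantive gap is the step you yourself flag as the main obstacle, and it needs more than you give it. The assertion that ``replacing $N$ by $\max(N_1,N_2)$ makes both formulas agree where defined'' is not literally correct: the two recipes $h_i^s(z)=\varphi^{-N_i}[\varphi^{N_i}(z),\varphi^{N_i}(v_i)]$ involve different base points $v_1\neq v_2$ in general, so they are not the same formula even after equalizing the exponents; and continuity of the bracket alone cannot force two continuous germs that agree at one point to agree on a neighbourhood. What closes the argument is the uniqueness of local product coordinates. For $z$ near $y$, each $h_i^s(z)$ lies in a small local unstable set $X^u(x,\rho)$ of the common value $x=h_1^s(y)=h_2^s(y)$, and both are stably equivalent to $z$ with uniform control: for $N=\max(N_1,N_2)$ one has $\varphi^N(h_i^s(z))\in X^s(\varphi^N(z),\ep_X)$, since iterating $\varphi$ only contracts stable distances. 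Now shrink $\delta_3$ so that $\lambda^N\rho<\ep_X$; then $\varphi^N(h_1^s(z))$ and $\varphi^N(h_2^s(z))$ both lie in $X^s(\varphi^N(z),\ep_X)\cap X^u(\varphi^N(x),\ep_X)$, which is the single point $[\varphi^N(z),\varphi^N(x)]$, whence $h_1^s(z)=h_2^s(z)$. This is the mechanism you should spell out, and note that essentially the same computation shows that the product of two composable basic sets is contained in a basic set --- needed because ``\'{e}tale groupoid'' also requires continuity of multiplication and inversion, a point your outline omits.
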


\subsection{The Stable $C^*$-algebra of a Smale Space}\label{C-algebras}

We aim to study the groupoid $C^*$-algebra of the \etale groupoid $\gs$. To accomplish this, we apply Renault's construction \cite{Ren}.

Let $C_c(\gs)$ denote the continuous functions of compact support on $\gs$, which is a complex linear space. A product and involution are defined on $C_c(\gs)$ as follows, for $f,g \in C_c(\gs)$ and $(x,y) \in \gs$,
\begin{eqnarray}
\nonumber
& f \cdot g(x,y) & = \sum_{(x,z) \in \gs} f(x,z)g(z,y) \\
\nonumber
& f^*(x,y) & = \overline{f(y,x)}.
\end{eqnarray}
This makes $C_c(\gs)$ into a complex $*$-algebra.

We aim to define a norm on $C_c(\gs)$ and then complete $C_c(\gs)$ in this norm to define a $C^*$-algebra. At this point there are several options. First we could look at all possible representations of $C_c(\gs)$ as operators on a Hilbert space. From these Hilbert spaces we obtain a norm and the completion is called the full $C^*$-algebra. Alternatively, we could consider a single representation on each equivalence class, called the regular representation. This gives rise to the reduced norm and the completion is the reduced $C^*$-algebra. In fact, it is shown in \cite{PS} that the groupoid of stable equivalence is amenable so that the full and reduced groupoid $C^*$-algebras are isomorphic.

\begin{definition}
The stable $C^*$-algebra, $S(X,\varphi,Q)$, is the completion of $C_c(\gs)$ in the reduced norm. When no confusion will arise $S(X,\varphi,Q)$ is denoted by $S$.
\end{definition}

A third option is possible when $(X,d,\varphi)$ is irreducible, which is called the fundamental representation \cite{KPW,PS}. We aim to represent $C_c(\gs)$ as operators on the Hilbert space
\[ \h = \ell^2(X^h(P,Q)). \]
To that end, for $f \in C_c(\gs)$ and $\xi \in \h$, define a representation $\pi:C_c(\gs)$ $\rightarrow \bh$ via
\[ \pi(f)\xi(x) = \sum_{(x,y) \in \gs} f(x,y) \xi(y). \]
With this formula, $\pi(f)$ is a bounded linear operator on $\h$. Moreover, we can complete $\pi(C_c(\gs))$ in the operator norm on this Hilbert space to obtain a $C^*$-algebra.

Let us comment on the generality of this construction. In the case that $(X,d,\varphi)$ is mixing, every stable and unstable equivalence class is dense. Moreover, $X^s(P) \cap X^u(Q)$ is dense in $X^u(Q)$ so that $\pi$ is a faithful representation to the reduced $C^*$-algebra and hence is isometric \cite{Ren}. Therefore, the full, reduced, and fundamental $C^*$-algebras of $\gs$ are all isomorphic and $S(X,\varphi,Q)$ is simple. For an irreducible Smale space, it can be shown that there is a canonical decomposition of $X$ into a finite number of distinct mixing components that are cyclically permuted by $\varphi$ so that $X^s(P)$ and $X^u(Q)$ are dense in each component, this remarkable fact is proven in both \cite{Put2} and \cite{Rue1}. Therefore, $\pi$ is faithful and $S(X,d,\varphi)$ is a direct sum of a finite number of simple components. We note that $S(X,\varphi,Q)$ is separable, nuclear, and stable \cite{PS, Put1}.

Each element of $f \in C_c(\gs)$ can be written as a finite sum of functions with support in a neighbourhood base set of the form $V^s(v,w,h^s,\delta)$. We use functions of this form so often in the sequel that we completely describe them in the following lemma, which follows from the definitions.

\begin{lemma}\label{S_basic_function}
Suppose $a$ is a function in $C_c(\gs)$ with support on the basic set $V^s(v,w,h^s,\delta)$ with $v \sim_s w$, $v,w \in X^u(Q)$ and $h^s: X^u(w,\delta) \rightarrow X^u(v,\delta^\prime)$ a local homeomorphism. Then, for $\delta_x \in \h$,
\[ \pi(a)\delta_x = \left\{ \begin{array}{cl}
a(h^s(x),x)\delta_{h^s(x)} & \textrm{ if } x \in X^u(w,\delta) \textrm{ and } h^s(x) \in X^u(v,\delta^\prime) \\
 0 & \textrm{ if } x \notin X^u(w,\delta). \\ \end{array} \right. \]
Define $\sor(a) \subseteq X^u(w,\delta)$ to be the points for which $a$ is non-zero on its domain.
\end{lemma}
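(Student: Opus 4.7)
The plan is to prove the formula by simply unwinding the definition of $\pi$ given before the lemma together with the explicit description of the basic set $V^s(v,w,h^s,\delta)$ from Theorem \ref{stable nbhd}. I do not expect any real obstacle; the argument amounts to tracking which groupoid pairs $(z,x)$ can contribute nonzero terms to the defining sum once we know that $\supp(a) \subseteq V^s(v,w,h^s,\delta)$.

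First I would fix an arbitrary $z \in X^h(P,Q)$ and evaluate the candidate vector at $z$: by definition,
\[
\pi(a)\delta_x(z) = \sum_{(z,y) \in \gs} a(z,y)\, \delta_x(y) = \begin{cases} a(z,x) & \text{if } (z,x) \in \gs, \\ 0 & \text{otherwise,} \end{cases}
\]
since the Kronecker delta $\delta_x(y)$ kills every term in the sum except $y=x$. Thus $\pi(a)\delta_x$ is supported on those $z$ for which $(z,x) \in \gs$ and $a(z,x) \neq 0$.

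Next I would feed in the support condition on $a$. Because $a$ vanishes off of $V^s(v,w,h^s,\delta)$, the point $z$ contributes only if $(z,x) \in V^s(v,w,h^s,\delta)$. Recalling the description
\[
V^s(v,w,h^s,\delta) = \{(h^s(x'),x') \mid x' \in X^u(w,\delta),\ h^s(x') \in X^u(v,\delta)\},
\]
membership $(z,x) \in V^s(v,w,h^s,\delta)$ is equivalent to $x \in X^u(w,\delta)$, $z = h^s(x)$, and $h^s(x) \in X^u(v,\delta)$. In particular, if $x \notin X^u(w,\delta)$ there is no admissible $z$ at all, giving $\pi(a)\delta_x = 0$ as required for the second case.

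Finally, in the remaining case where $x \in X^u(w,\delta)$ and $h^s(x) \in X^u(v,\delta)$, the only $z$ producing a nonzero contribution is $z = h^s(x)$, and the value is $a(h^s(x),x)$. Assembling this into a vector in $\h = \ell^2(X^h(P,Q))$ yields
\[
\pi(a)\delta_x = a(h^s(x),x)\, \delta_{h^s(x)},
\]
which is the first case of the formula. The final sentence of the lemma is a definition rather than a claim, so nothing further needs to be verified; one simply records that $\sor(a)$ denotes the set of $x \in X^u(w,\delta)$ on which $a(h^s(x),x) \neq 0$.
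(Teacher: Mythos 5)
Your proposal is correct and matches the paper's intent exactly: the paper offers no written proof, stating only that the lemma ``follows from the definitions,'' and your argument is precisely that unwinding of the definition of $\pi$ and of the basic set $V^s(v,w,h^s,\delta)$. The only point worth recording is the implicit third case, $x \in X^u(w,\delta)$ but $h^s(x) \notin X^u(v,\delta)$, where your support argument correctly gives $a(h^s(x),x)=0$ and hence $\pi(a)\delta_x = 0$, consistent with the displayed formula.
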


We note that every element in $S(X,\varphi,Q)$ can be uniformly approximated by a finite sum of functions supported in a neighbourhood base set. We will usually begin by proving results using these functions and then appealing to continuity for the general result.

\subsection{The Stable Ruelle Algebra of a Smale Space}\label{Ruelle}

A brief construction of the stable Ruelle algebra is given. The Ruelle algebras were first constructed in \cite{Rue2} and alternative constructions were given in \cite{Put1} and \cite{PS} along with many remarkable properties of these $C^*$-algebras. We also note that the stable and unstable Ruelle algebras were shown to satisfy a noncommutative version of Spanier-Whitehead duality in \cite{KPW}.

Given an irreducible Smale space $(X,d,\varphi)$, the homeomorphism $\varphi:X \rightarrow X$ induces an automorphism $\alpha$ on the $C^*$-algebra $S(X,\varphi,Q)$ by
\[ \alpha(a)(x,y) = a(\varphi^{-1}(x),\varphi^{-1}(y)) \]
where $a$ is in $S(X,\varphi,Q)$ and $(x,y)$ are in $\gs$. The homeomorphism $\varphi$ also induces a canonical unitary on the Hilbert space $\h = \ell^2(X^h(P,Q))$ via
\[ u\delta_x = \delta_{\varphi(x)}. \]
Routine calculations show that $(\pi,u)$ are a covariant representation for $(S,\alpha)$.

\begin{definition}[\cite{Put1}]
The stable Ruelle algebra is the crossed product
\[ S(X,\varphi,Q) \rtimes_{\alpha} \mathbb{Z}. \]
Occasionally, we supress the dependence on $Q$ and write $S \rtimes_{\alpha} \mathbb{Z}$.
\end{definition}

\section{Spectral Triples on Smale spaces}

\subsection{Spectral Triples}

Here we define a spectral triple and state some general properties of spectral triples used in the sequel. For a general reference to spectral triples see \cite{Con1}.

To simplify notation we begin to employ $[a,b]$ to denote the commutator $ab-ba$.

\begin{definition}\label{spectral triple}
A {\it spectral triple} $(A,\h,D)$ consists of
\begin{description}
\item[(i)] a separable Hilbert space $\h$,
\item[(ii)] a $*$-algebra $A$ of bounded operators on $\h$,
\item[(iii)] an unbounded self-adjoint operator $D$ on $\h$ such that:
  \begin{description}
  \item[(a)] the set $\{a \in A | [D,a] \in \bh\}$ is norm dense in $A$ and
  \item[(b)] the operator $a(1+D^2)^{-1}$ is a compact operator on $\h$ for all $a$ in $A$.
  \end{description}
\end{description}
\end{definition}

We note that the condition $a(1+D^2)^{-1}$ is a compact operator on $\h$, for all $a$ in $A$, can be replaced with $(1+D^2)^{-1}$ is a compact operator on $\h$, when $A$ is unital.

\begin{definition}\label{summability}
Suppose $(A,\h,D)$ is a spectral triple over a unital $C^*$-algebra $A$ with
\[ \tr((1+D^2)^{-\frac{p}{2}}) < \infty \]
for some positive number $p$. Then the spectral triple is said to be {\it $p$-summable}. Furthermore, the value
\[ \dim_S((A,\h,D)) := \inf\{p>0 | \tr((1+D^2)^{-\frac{p}{2}}) < \infty \} \]
is called the {\it spectral dimension} of the spectral triple. We call $(A,\h,D)$ {\it $\theta$-summable} if, for all $t > 0$,
\[ \tr(e^{-t(1+D^2)}) < \infty. \]
\end{definition}

For spectral triples coming from non-unital $C^*$-algebras the definitions of summability are much more complex. See \cite{Rennie} for details. However, in the case we are interested in, where the $C^*$-algebra is $S(X,\varphi,Q)$, the definition simplifies (since $S(X,\varphi,Q)$ has local units and $X^u(Q)$ is the unit space of the groupoid). For $S(X,\varphi,Q)$, the spectral triple $(S,\h,D)$ is $p$-summable if, for all $a$ in $C_c(X^u(Q))$,
\[ \tr(a(1+D^2)^{-\frac{p}{2}}) < \infty \]
and it is $\theta$-summable if, for all $a$ in $C_c(X^u(Q))$ and for all $t > 0$,
\[ \tr(a e^{-t(1+D^2)}) < \infty. \]

\subsection{Spectral Triples for Smale Spaces} \label{ST_SS}

We wish to construct spectral triples on the stable $C^\ast$-algebras of a Smale space which are geometric and encode the dynamics in a natural way. We begin by constructing a function on $X^s(P)$ and use this function to define a spectral triple on $S(X,\varphi,Q)$.

Let $P$ and $Q$ be finite, mutually distinct, $\varphi$-invariant sets of periodic points. From this point forward, assume that $S(X,\varphi,Q)$ is represented on $\h = \ell^2(X^h(P,Q))$ in order to simplify notation.

Select $0 < \ep \leq \lambda^{-1} \ep_X/2$ where $\lambda > 0$ is the local expansion constant of the Smale space $(X,d,\varphi)$. We aim to define a function $\omega_0:X^s(P) \rightarrow [0,1]$. Consider the closed sets $\overline{X^s(P,\ep)}$ and $X^s(P) \setminus \varphi^{-1}(X^s(P,\ep))$ and observe that these two sets are disjoint. Now an application of Urysohn's lemma implies that there exists a continuous function
\[ \omega_0: X^s(P) \rightarrow [0,1] \]
such that $\omega_0(x) = 0$ for all $x \in \overline{X^s(P,\ep)}$, and $\omega_0(x) = 1$ for all $x \in X^s(P) \setminus \varphi^{-1}(X^s(P,\ep))$. We remark that in practice we may define $\omega_0$ as desired on the complement of our two closed sets, but at this point we merely require that a continuous function exists. A typical function $\omega_0$ is illustrated in Figure \ref{fig:omega_0}, where the notation appearing in the figure is defined as follows.

\begin{notation}
We define the following sets that anticipate the constructions in the sequel and are natural in that context.
\begin{eqnarray*}
\Omega_P  & = & X^s(P,\ep) \setminus P, \\
\Omega_P^c & = & X^s(P) \setminus X^s(P,\ep), \\
E_0 & = & \overline{\varphi^{-1}(X^s(P,\ep)) \cap \Omega_P^c}, \\
E_N & = & \varphi^{-N}(E_0).
\end{eqnarray*}
Let us make some remarks about these sets. Observe that $\omega_0(x) = 0$ for $x \in \Omega_P$ and $\omega_0(x) \geq 0$ for $x \in \Omega_P^c$. In particular, the function $\omega_0$ is defined to either $0$ or $1$ on the points of closure of $E_0$. Also note that
\[ \bigcup_{N \in \mathbb{Z}} E_N = X^s(P) \setminus P. \]
\end{notation}

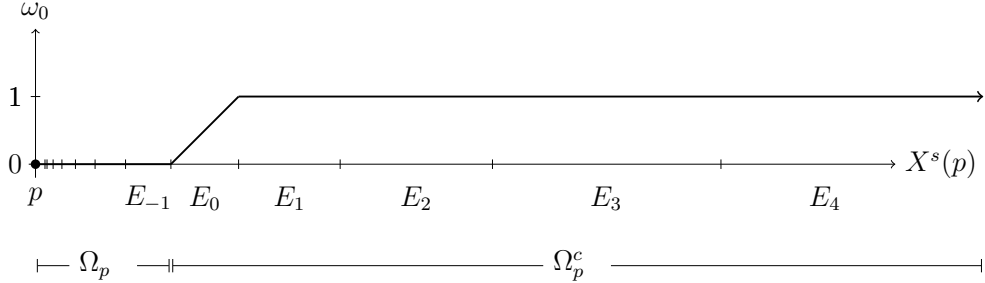
\begin{figure}
\begin{center}
\scalebox{0.90}{
\begin{tikzpicture}
\tikzstyle{axes}=[]
\begin{scope}[style=axes]
	\draw[->] (-0.1,0) -- (12.7,0) node[right] {$X^s(p)$};
	\draw[->] (0,-0.2) node[below] {$p$} -- (0,2) node[above] {$\omega_0$};
	\node at (-0.3,0) {$0$};
	\pgfpathcircle{\pgfpoint{0cm}{0cm}} {2pt};
	\pgfusepath{fill}
	\foreach \y in {1cm}
	  \draw (-2pt,\y) -- (2pt,\y);
	\node at (-0.3,1) {$1$};
	\foreach \x in {0.14cm, 0.17cm, 0.26cm, 0.39cm, 0.59cm, 0.88cm, 1.33cm, 2cm, 3cm, 4.5cm, 6.75cm, 10.125cm}
	  \draw (\x,-2pt) -- (\x,2pt);
	\node at (-0.3,1) {$1$};
	\draw[-,thick] (0,0) -- (2,0);
	\draw[-,thick] (2,0) -- (3,1);
	\draw[->,thick] (3,1) -- (14,1);
	\node at (1.66,-0.5) {$E_{-1}$};
	\node at (2.5,-0.5) {$E_0$};
	\node at (3.75,-0.5) {$E_1$};
	\node at (5.625,-0.5) {$E_2$};
	\node at (8.43,-0.5) {$E_3$};
	\node at (11.66,-0.5) {$E_4$};
	\draw[|-] (0.02,-1.5) -- (0.5,-1.5) node[right] {$\Omega_p$};
	\draw[-|] (1.5,-1.5) -- (1.98,-1.5);
	\draw[|-] (2.02,-1.5) -- (7.5,-1.5) node[right] {$\Omega_p^c$};
	\draw[-|] (8.5,-1.5) -- (13.98,-1.5);
\end{scope}
\end{tikzpicture}}
\end{center}
\caption{The function $\omega_0^p$ for some $p$ in $P$}
\label{fig:omega_0}
\end{figure}

Using $\omega_0$ allows us to encode the dynamics in a natural manner. Let $x$ be a point in $X^s(P) \setminus P$. We aim to define a function which essentially counts the number of iterations it requires for $x$ to be drawn into $\Omega_P$ if it begins in $\Omega_P^c$ and subtracts to number of inverse iterations it requires for $x$ to be removed from $\Omega_P$ if it begins in $\Omega_P$. To that end, define $\omega_s:X^s(P)\setminus P \rightarrow \mathbb{R}$ via
\[ \omega_s(x) = \sum_{n=0}^\infty \omega_0 \circ \varphi^n (x) - \sum_{n=1}^\infty (1-\omega_0) \circ \varphi^{-n} (x). \]
The function $\omega_s$, arising from the function $\omega_0$ in figure \ref{fig:omega_0}, is illustrated in Figure \ref{fig:omega_s}. The following Lemma summarizes the essential properties of $\omega_s$.

\begin{figure}[htb]
\begin{center}
\scalebox{0.90}{
\begin{tikzpicture}
\tikzstyle{axes}=[]
\begin{scope}[style=axes]
	\draw[->] (-0.2,0) node[below] {$p$} -- (12.7,0) node[right] {$X^s(p)$};
	\draw[<->] (0,-3) -- (0,6) node[above] {$\omega_s$};
	\pgfpathcircle{\pgfpoint{0cm}{0cm}} {2pt};
	\pgfusepath{fill}
	\foreach \y in {-2cm,-1cm,1cm,2cm,3cm,4cm,5cm}
	  \draw (-2pt,\y) -- (2pt,\y);
	\node at (-0.3,1) {$1$};
	\node at (-0.3,2) {$2$};
	\node at (-0.3,3) {$3$};
	\node at (-0.3,4) {$4$};
	\node at (-0.3,5) {$5$};
	\foreach \x in {0.14cm, 0.17cm, 0.26cm, 0.39cm, 0.59cm, 0.88cm, 1.33cm, 2cm, 3cm, 4.5cm, 6.75cm, 10.125cm}
	  \draw (\x,-2pt) -- (\x,2pt);
	\node at (-0.3,1) {$1$};
	\draw[<-,thick] (0.59,-3) -- (0.88,-2);
	\draw[-,thick] (0.88,-2) -- (1.33,-1);
	\draw[-,thick] (1.33,-1) -- (2,0);
	\draw[-,thick] (2,0) -- (3,1);
	\draw[-,thick] (3,1) -- (4.5,2);
	\draw[-,thick] (4.5,2) -- (6.75,3);
	\draw[-,thick] (6.75,3) -- (10.125,4);
	\draw[->,thick] (10.125,4) -- (14,4.76);
	\node at (1.66,0.5) {$E_{-1}$};
	\node at (2.5,-0.5) {$E_0$};
	\node at (3.75,-0.5) {$E_1$};
	\node at (5.625,-0.5) {$E_2$};
	\node at (8.43,-0.5) {$E_3$};
	\node at (11.66,-0.5) {$E_4$};
\end{scope}
\end{tikzpicture}}
\end{center}
\caption{The function $\omega_s$ for some $p$ in $P$}
\label{fig:omega_s}
\end{figure}
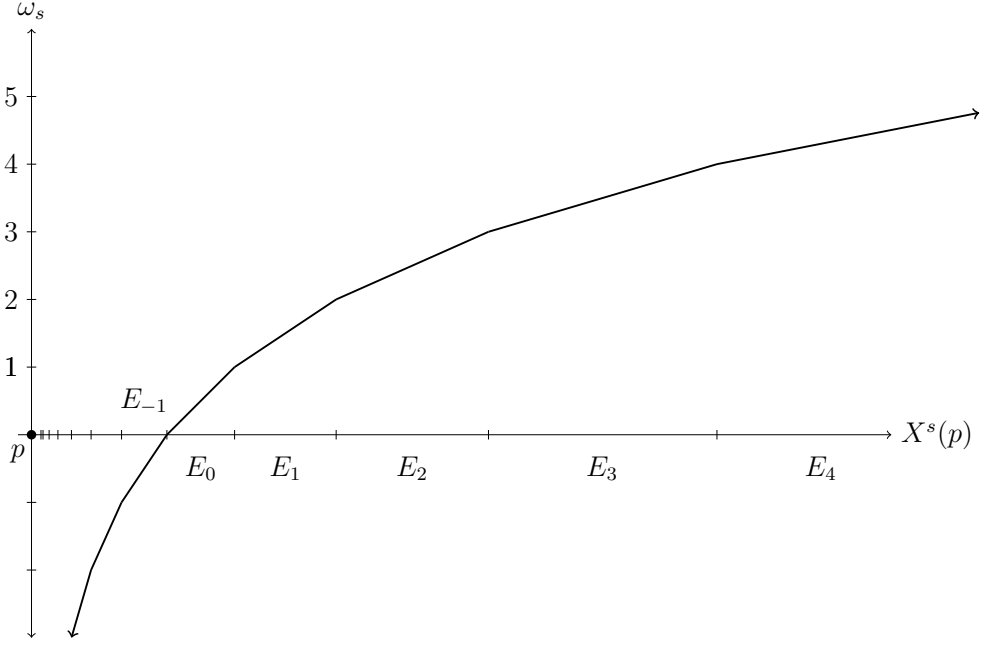

\begin{lemma}\label{omega_s_properties}
Suppose $P$ is a finite, $\varphi$-invariant set of periodic points in a Smale space $(X,d,\varphi)$ and $\omega_s:X^s(P) \setminus P \rightarrow \mathbb{R}$ is defined as above. Then,
\begin{enumerate}
\item $\omega_s(x) \leq 0$ for $x \in \Omega_P$ and $\omega_s(x) \geq 0$ for $x \in \Omega_P^c$,
\item $\omega_s \circ \varphi - \omega_s = 1$,
\item $\omega_s(x) = \omega_0 \circ \varphi^{N}(x) + N$ for $x \in E_N$, and
\item $\omega_s$ is continuous on $X^s(P) \setminus P$.
\end{enumerate}
\end{lemma}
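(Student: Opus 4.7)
The plan is to derive the four properties in the order (3), (1), (2), (4), after first nailing down how $\omega_0$ behaves on each $E_N$. The only structural input I need is the forward invariance $\varphi(X^s(P,\ep))\subset X^s(P,\ep)$, which is immediate from the $\lambda^{-1}$-contraction of $\varphi$ on local stable sets. Its contrapositive gives backward invariance of $\Omega_P^c$ under $\varphi^{-1}$. From these invariances I verify the key book-keeping lemma: for $M\geq 1$, $E_M\subset X^s(P)\setminus\varphi^{-1}(X^s(P,\ep))$, so $\omega_0\equiv 1$ on $E_M$; for $M\leq -1$, $E_M\subset \overline{X^s(P,\ep)}$, so $\omega_0\equiv 0$ on $E_M$; and on $E_0$ the function $\omega_0$ can take any value in $[0,1]$.

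With that in hand, property (3) falls out by a direct count. If $x\in E_N$, then $\varphi^k(x)\in E_{N-k}$, so $\omega_0(\varphi^k(x))=1$ for $0\le k<N$, equals $\omega_0(\varphi^N(x))$ at $k=N$, and equals $0$ for $k>N$; the backward iterates $\varphi^{-k}(x)\in E_{N+k}$ for $k\ge 1$ satisfy $\omega_0=1$, so $(1-\omega_0)=0$ and the second sum is empty. The case $N<0$ is symmetric. Collapsing telescoping in both directions yields $\omega_s(x)=\omega_0(\varphi^N(x))+N$. Property (1) is then immediate: on $\Omega_P$ one has $N\leq -1$, so $\omega_s\leq 1+N\leq 0$; on $\Omega_P^c$ one has $N\geq 0$, so $\omega_s\geq N\geq 0$. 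Property (2) is a purely formal telescoping check: reindexing the first sum loses the $n=0$ term $\omega_0(x)$ and reindexing the second gains the term $(1-\omega_0)(x)$, producing the constant shift $\omega_s\circ\varphi-\omega_s=\pm 1$ (I note in passing that my computation gives $-1$; the stated sign can be adjusted by a convention on the second sum).

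For property (4) the issue is local uniformity of the finite truncation. Fix $x\in X^s(P)\setminus P$. Since $x\in X^s(P)$, pick $N_1$ with $\varphi^{N_1}(x)\in X^s(P,\ep/2)$; by continuity of $\varphi^{N_1}$ and forward invariance of $X^s(P,\ep)$, for $y$ in a small \nbhd $V_1$ of $x$ and all $n\geq N_1$ we get $\varphi^n(y)\in X^s(P,\ep)$, killing the tail of the first sum. For the backward tail I use the local expansion $d(\varphi^{-m}(x),p_m)\geq \lambda^m d(x,p)$ (valid as long as the orbit remains in the local stable set $X^s(p_m,\ep_X)$) to produce $m_0$ with $\varphi^{-m_0}(x)\notin X^s(P,\ep)$; continuity of $\varphi^{-m_0}$ gives a \nbhd $V_2$ on which the same holds, and then the backward invariance of $\Omega_P^c$ under $\varphi^{-1}$ propagates this to all $m\geq m_0$, forcing $(1-\omega_0)(\varphi^{-m}(y))=0$ for $m\geq m_0+1$ and all $y\in V_2$. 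On $V_1\cap V_2$, $\omega_s$ is a finite sum of continuous functions, hence continuous at $x$.

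The main obstacle is the backward half of Step 4. Forward in time the dynamics is genuinely contracting and $X^s(P,\ep)$ is forward-invariant, so uniform control is automatic; backward in time, $\varphi^{-1}$ only expands while the orbit stays in a local stable set, so I have to combine a quantitative escape estimate for a single point $x$ with a continuity argument to propagate to a neighborhood, and then invoke the complementary invariance to push the conclusion to all sufficiently large $m$. Everything else is either bookkeeping on the $E_N$ decomposition or a formal rearrangement of series.
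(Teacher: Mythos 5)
Your proposal is correct and follows essentially the same route as the paper: establish forward invariance of $X^s(P,\ep)$ to pin down $\omega_0$ on each $E_N$ (namely $\omega_0\equiv 1$ on $E_N$ for $N\geq 1$, $\omega_0\equiv 0$ for $N\leq -1$), evaluate the two sums termwise to get (3) and hence (1), and handle continuity by reducing $\omega_s$ locally to a finite sum --- your uniform two-sided truncation on a neighbourhood is a slightly more careful version of the paper's pasting of $\omega_s$ over the closed, locally finite cover $\{E_N\}$. Your sign in (2) is in fact the correct one: the paper's own formula (3) gives $\omega_s(\varphi(x))=\omega_0(\varphi^{N-1}(\varphi(x)))+N-1=\omega_s(x)-1$ for $x\in E_N$, so statement (2) should read $\omega_s\circ\varphi-\omega_s=-1$, and this corrected sign is what makes the later identity $[u,D]=u$ (rather than $-u$) come out right.
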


\begin{proof}
First, suppose $x \in E_{-N}$ for some $N \in \mathbb{N}$. Then, the sum on the left, in the definition of $\omega_s$, is zero since $\omega_0(x) = 0$ and $\varphi(x) \in E_{-N-1}$. The sum on the right is finite since $\varphi^N(x) \in E_0$ and $1-\omega_0(\varphi^n(x)) = 0$ for all $n \geq N+1$. Moreover, we have the calculation
\[ \omega_s(x) = - \sum_{n=1}^\infty (1-\omega_0) \circ \varphi^{-n} (x) = -(N-1) - (1-\omega_0)(\varphi^N(x)) = \omega_0(\varphi^{N}(x)) - N. \]
On the other hand, suppose $x \in E_N$ for some $N \in \mathbb{N} \cup {0}$. Then, the sum on the right, in the definition of $\omega_s$, is zero since $1-\omega_0(x) = 0$ and $\varphi^{-1}(x) \in E_{N+1}$. The sum on the left is finite since $\varphi^N(x) \in E_0$ and $\omega_0(\varphi^n(x)) = 0$ for all $n \geq N+1$. Moreover, we have the calculation
\[ \omega_s(x) = \sum_{n=0}^\infty \omega_0 \circ \varphi^n (x) = \omega_0(\varphi^N(x)) + N. \]
This proves that $\omega_s$ is well-defined and the first three statements in the Lemma. For the fourth, we observe that $\omega_s(x)$ is continuous on $E_N$, for all $N \in \mathbb{Z}$, since $\omega_0$ is continuous on $E_0$. If $E_k \cap E_{k+1} = r$, for $k \in \mathbb{Z}$, it follows, from the definition of $\omega_s$, that $\omega_s(r)=k+1$. Since $\bigcup_{N \in \mathbb{Z}} E_N = X^s(P) \setminus P$ and $\omega_s$ gives equal value to the common boundary of $E_k$ and $E_{k+1}$ for all $k \in \mathbb{N}$, $\omega_s$ is continuous. 
\end{proof}

We now consider how the function $\omega_s$ interacts with functions in $C_c(\gs)$ supported on basic sets, see Lemma \ref{S_basic_function} for details on basic sets and for a definition of $\sor(a)$ for $a$ in $S(X,\varphi,Q)$.

\begin{lemma}\label{omega_s_interaction}
Suppose $P$ is a finite, $\varphi$-invariant set of periodic points in a Smale space $(X,d,\varphi)$ and $\omega_s:X^s(P) \rightarrow \mathbb{R}$ is defined as above. Let $a \in C_c(\gs)$ be supported on a basic set $V^s(v,w,h^s,\delta)$ so that $\sor(a) \subseteq X^u(w,\delta)$ and $h^s(\sor(a)) \subseteq X^u(v,\delta^\prime)$. Then,
\begin{enumerate}
\item there exists $N \in \mathbb{Z}$ such that $E_n \cap \sor(a) = \emptyset$ for all $n \leq N$,
\item for all $N$, the number of points in $E_N \cap \sor(a)$ is finite,
\item if $x \in E_N \cap \sor(a)$, then there exists $K \in \mathbb{N}$ such that $h^s(x) \in \bigcup_{k=-K}^K E_{N+k}$.
\end{enumerate}
\end{lemma}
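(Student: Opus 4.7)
\emph{Proof proposal.}

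The plan is to exploit two compactness/separation facts. First, $\overline{\sor(a)}$ is a compact subset of $X^u(w,\delta)$, since $a \in C_c(\gs)$ has support compact in the \etale topology on the basic set $V^s(v,w,h^s,\delta)$, on which the source map is a homeomorphism onto an open subset of $X^u(Q)$. Second, $P \cap X^u(Q) = \emptyset$: if $p \in P \cap X^u(q)$, then $d(\varphi^{-n}(p), \varphi^{-n}(q)) \to 0$, but the orbits of $p,q$ lie in the finite disjoint sets $P,Q$, forcing $d(P,Q)=0$, a contradiction. Hence $\sor(a) \cap P = \emptyset$ and $d(\sor(a),P)>0$.

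For (1), I rewrite $E_{-M} = \varphi^M(E_0)$. Since $E_0 \subseteq \overline{\varphi^{-1}(X^s(P,\ep))}$, uniform continuity of $\varphi^{-1}$ places $E_0$ in a bounded $X$-neighbourhood of $P$; the stable-direction contraction $d(\varphi^M(z),\varphi^M(p)) \leq \lambda^{-M} d(z,p)$ for $z \in X^s(p,\ep_X)$ then yields $E_{-M} \subseteq \{y : d(y,P) < C\lambda^{-M}\}$ for a constant $C$ depending only on $E_0$. Choosing $M$ large enough that $C\lambda^{-M} < d(\sor(a),P)$ forces $E_{-M} \cap \sor(a) = \emptyset$.

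For (2), I use the local product structure given by the bracket: at each $y_0 \in X^s(P) \cap X^u(w,\delta)$, a neighbourhood of $y_0$ in $X$ is homeomorphic via the bracket to a product of a local stable and a local unstable set at $y_0$, in which $X^s(P)$ locally coincides with the stable fibre through $y_0$ while $X^u(w,\delta)$ sits inside the unstable fibre through $y_0$. Their intersection near $y_0$ is thus just $\{y_0\}$, so $X^s(P) \cap X^u(w,\delta)$ is discrete in $X^u(w,\delta)$. Since $E_N$ is closed in $X$ and $\overline{\sor(a)}$ is compact, $E_N \cap \sor(a)$ is a relatively compact discrete set, hence finite.

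For (3), iterating $\omega_s \circ \varphi = \omega_s + 1$ gives
\[ \omega_s(h^s(x)) - \omega_s(x) = \omega_s\bigl([\varphi^{N_0}(x),\varphi^{N_0}(v)]\bigr) - \omega_s(\varphi^{N_0}(x)), \]
where $N_0$ is the integer from Lemma \ref{local homeo}. As $x$ ranges over $\sor(a)$, the point $\varphi^{N_0}(x)$ lies in the compact set $\varphi^{N_0}(\overline{\sor(a)}) \subseteq X^u(\varphi^{N_0}(w),\ep_X/2) \subseteq X^u(Q)$, and by continuity of the bracket $[\varphi^{N_0}(x),\varphi^{N_0}(v)]$ ranges over a compact subset of $X^u(\varphi^{N_0}(v),\ep_X/2) \subseteq X^u(Q)$. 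Both compact sets are disjoint from $P$, so $\omega_s$ is bounded on their intersections with $X^s(P)$. The resulting uniform bound on the right-hand side, combined with $\omega_s(E_M) \subseteq [M,M+1]$, produces the required $K$. This is the main obstacle: $\omega_s$ is unbounded on $X^s(P) \setminus P$, blowing up as one approaches $P$ along a stable class, so a uniform bound can only come from a compact set kept at positive distance from $P$, which is exactly where the hypothesis $P \cap Q = \emptyset$ enters decisively.
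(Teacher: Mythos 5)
Parts (1) and (2) of your argument are sound and essentially the paper's: the positive distance from $\overline{\sor(a)}$ to $P$ together with $E_{-M}=\varphi^{M}(E_0)\subseteq X^s(P,C\lambda^{-M})$ gives (1), and transversality of local stable and local unstable sets plus compactness gives (2). (One imprecision: $X^s(P)$ does not ``locally coincide with the stable fibre through $y_0$'' --- it is dense in $X$ for irreducible systems; what you need, and what is true, is that the compact set $E_N$ is covered by finitely many local stable sets, each of which meets $X^u(w,\delta)$ in at most one point.)

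Part (3) contains a genuine gap. Your reduction of $\omega_s(h^s(x))-\omega_s(x)$ to $\omega_s([\varphi^{N_0}(x),\varphi^{N_0}(v)])-\omega_s(\varphi^{N_0}(x))$ via $\omega_s\circ\varphi=\omega_s+1$ is correct, but the bound you then invoke --- that $\omega_s$ is bounded on the intersection with $X^s(P)$ of a compact set kept at positive distance from $P$ --- is false. On such a set $\omega_s$ is bounded below (that is exactly your part (1)) but not above: $\omega_s\approx N$ on $E_N$, and $E_N$ meets any $X^u(w,\delta_1)$ for all sufficiently large $N$; indeed Theorem \ref{entropy_approx} says the number of such intersection points grows like $e^{Nh(X,\varphi)}$. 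Your closing sentence pinpoints the misconception: $\omega_s$ blows up to $-\infty$ as one approaches $P$ along a stable class, but it also blows up to $+\infty$ on points that require many iterates to be attracted into $X^s(P,\ep)$, and such points populate every compact neighbourhood in $X^u(Q)$ because $X^s(P)$ is dense. So the uniform bound cannot come from compactness away from $P$; it must come from the fact that the two points being compared lie within $\ep_X/2$ of each other on a common local stable set. This is what the paper does: choose $M$ so that $d(\varphi^M(h^s(x)),\varphi^M(x))<\ep_X/2$ and $[\varphi^M(x),\varphi^M(h^s(x))]=\varphi^M(x)$ for all $x\in\sor(a)$, observe that for $L$ large the sets $E_L$ have diameter along stable sets exceeding $\ep_X$, conclude that two such nearby, stably bracket-related points at depth beyond $M+L$ can lie at most one level apart, and absorb the finitely many shallower points of $\sor(a)$ (finite by (1) and (2)) into the maximum defining $K$. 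You need this width-of-$E_L$ comparison, or an equivalent estimate relating metric distance along stable sets to the difference of levels, to close the argument.
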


\begin{proof}
Since $\sor(a)$ is a pre-compact subset of $X^u(Q)$ and $P \nsubseteq X^u(Q)$, define $\delta_0>0$ via $\delta_0 = \inf \{d(p,x) | p \in P, x \in \sor(a)\}$. Now there exists $N \in \mathbb{Z}$ such that $E_n \subset X^s(P,\delta_0)$ for all $n \leq N$. Therefore, $E_n \cap \sor(a) = \emptyset$ for all $n \leq N$ as well. For the second claim, $\sor(a)$ and $E_N$ are transverse and compact for all $N$, it follows that $E_N \cap \sor(a)$ is finite. 

For the third claim, first note that given that $a$ is supported in $V^s(v,w,h^s,\delta)$, there exists $M$ such that for all $x \in \sor(a)$, $d(\varphi^M(h^s(x)),\varphi^M(x)) < \ep_X/2$ and $[\varphi^M(x),\varphi^M(h^s(x))]=\varphi^M(x)$. Moreover, it follows that there exists $L \in \mathbb{N}$ such that $D > \ep_X$ where $D = \sup\{d(y,y^\prime) \st y,y^\prime \in E_L \textrm{ and } [y,y^\prime]=y\}$; that is, we can find $E_L$ so that $E_L$ has diameter larger than $\ep_X$ on the stable set of each periodic point $p$ in $P$. Now we claim that if $(h^s(x),x) \in V^s(v,w,h^s,\delta)$ has the property that $x \in E_m$ where $m \geq M+L+1$, then $h^s(x) \in \bigcup_{k=-1}^{1} E_{m+k}$. Indeed, for all $(h^s(x),x)$ we have $d(\varphi^M(h^s(x)),\varphi^M(x)) < \ep_X/2$ and if $\varphi^M(x)$ is in $E_{L+1}$, then by the triangle inequality we have $\varphi^M(h^s(x)) \in \cup_{k=-1}^{1} E_{L+1+k}$. Now applying $\varphi^{-M}$ to $\varphi^M(x)$ and $\varphi^M(h^s(x))$ proves the claim. We are left with the case that $\sor(a) \in E_n$ for $n < M+L+1$. However, combining part ($1$) and ($2$) implies that there are only a finite number of such elements, so that we may define
\[ K = \max\{ 1,|i-j| \st h^s(x) \in E_i, x \in E_j, \textrm{ and } i,j < M+L+1 \}, \]
which is finite. Now $K$ has the property that if $x \in E_N \cap \sor(a)$, then $h^s(x) \in \bigcup_{k=-K}^K E_{N+k}$.
\end{proof}

\subsection{A $\theta$-Summable Spectral Triple} \label{theta_summable_triple}

In this section, we define a spectral triple on $S(X,\varphi,Q)$. The idea is to use $\omega_s$ to define a Dirac operator on $\h = \ell^2(X^h(P,Q))$. Let
\[ D \delta_x = \omega_s(x) \delta_x. \]
The domain of $D$ is given by
\[ \dom(D) = \{ \xi \st \sum_{x \in X^h(P,Q)} \omega_s^2(x)|\xi(x)|^2 < \infty \}. \]
and routine calculations show that $D$ is self-adjoint and unbounded.

\begin{lemma}
For $a \in C_c(\gs)$, the commutator $[D,a]$ is a bounded operator on $\h$.
\end{lemma}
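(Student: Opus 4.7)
The plan is to reduce to the case where $a \in C_c(\gs)$ is supported on a single basic set $V^s(v,w,h^s,\delta)$, since every element of $C_c(\gs)$ is a finite sum of such functions. Applying Lemma \ref{S_basic_function} to evaluate $a\delta_x$ and then the diagonal operator $D$ to the result, one obtains the formula
\[ [D,a]\delta_x = \bigl( \omega_s(h^s(x)) - \omega_s(x) \bigr)\, a(h^s(x),x)\, \delta_{h^s(x)} \]
for $x \in \sor(a)$, and $[D,a]\delta_x = 0$ otherwise. The problem thus reduces to bounding the scalar prefactor uniformly in $x$.

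The central step, and the main obstacle, is bounding $|\omega_s(h^s(x)) - \omega_s(x)|$ uniformly on $\sor(a)$. Since $P \cap Q = \emptyset$, Lemma \ref{P_Q_dense} guarantees that $X^h(P,Q)$ contains no points of $P$, so any relevant $x$ lies in $X^s(P) \setminus P = \bigcup_{N \in \mathbb{Z}} E_N$. Lemma \ref{omega_s_interaction}(3) then supplies an integer $K$, depending only on $a$, such that $x \in E_N \cap \sor(a)$ forces $h^s(x) \in \bigcup_{|k| \leq K} E_{N+k}$. Substituting this into the identity $\omega_s|_{E_M} = \omega_0 \circ \varphi^M + M$ from Lemma \ref{omega_s_properties}(3), the integer offsets cancel up to at most $K$ while the two values of $\omega_0 \in [0,1]$ differ by at most $1$, yielding $|\omega_s(h^s(x)) - \omega_s(x)| \leq K + 1$. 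This is the crucial point: $\omega_s$ is essentially a counting function along the $\varphi$-orbit and is not globally bounded, so only the shift-compatibility encoded in Lemma \ref{omega_s_interaction}(3) prevents the commutator from blowing up.

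With this bound in hand, since $a$ has compact support, $\|a\|_\infty < \infty$ and the scalar prefactor in the commutator formula is bounded by $(K+1)\|a\|_\infty$. Because the range and source maps are homeomorphisms on each basic set (Theorem \ref{stable nbhd}), the vectors $\{\delta_{h^s(x)}\}_{x \in \sor(a)}$ are distinct orthonormal basis elements of $\h$, and a direct orthogonality computation on a general $\xi = \sum_x \xi(x)\delta_x$ then yields $\|[D,a]\xi\| \leq (K+1)\|a\|_\infty \|\xi\|$, completing the argument.
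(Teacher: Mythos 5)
Your proposal is correct and follows essentially the same route as the paper: reduce to a function supported on a single basic set, use Lemma \ref{omega_s_interaction}(3) together with Lemma \ref{omega_s_properties}(3) to bound $|\omega_s(h^s(x)) - \omega_s(x)|$ by $K+1$, and conclude from compact support of $a$. Your closing orthogonality argument via injectivity of $h^s$ just makes explicit a step the paper leaves implicit.
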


\begin{proof}
Let $a$ in $C_c(\gs)$ be supported on a basic set $V^s(v,w,h^s,\delta)$. By part ($3$) of Lemma \ref{omega_s_interaction}, if $x \in E_N \cap \sor(a)$, then there exists $K \in \mathbb{N}$ such that $h^s(x) \in \bigcup_{k=-K}^K E_{N+k}$. Therefore, for any $x \in E_N \cap \sor(a)$, using part ($3$) of lemma \ref{omega_s_properties}, we compute
\begin{eqnarray*}
\|[D,a] \delta_x\| & = & \|(\omega_s(h^s(x)) - \omega_s(x)) a(h^s(x),x) \delta_{h^s(x)}\| \\
& = & |\omega_s(h^s(x)) - \omega_s(x)| |a(h^s(x),x)| \\
& \leq & |\omega_0(\varphi^{N+K}(h^s(x))) + N+K - (\omega_0(\varphi^{N}(x)) + N)| |a(h^s(x),x)| \\
& \leq & (K+1) |a(h^s(x),x)|.
\end{eqnarray*}
Since $a$ is compactly supported, $|a(h^s(x),x)|$ attains a maximum value. Moreover, everything above is independent of $N$ so that $[D,a]$ is bounded. For the general case we recall that any element of $C_c(\gs)$ is in the span of functions supported on basic sets.
\end{proof}

\begin{proposition}\label{compact operator}
For every $a$ in $S(X,\varphi,Q)$, the operator $a(1+D^2)^{-1}$ is compact on $\h$.
\end{proposition}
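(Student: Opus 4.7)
The plan is to reduce to the case of $a\in C_c(\gs)$ supported on a single basic set $V^s(v,w,h^s,\delta)$ and produce an explicit norm-approximation of $a(1+D^2)^{-1}$ by finite-rank operators. Since $C_c(\gs)$ is norm-dense in $S(X,\varphi,Q)$, and $\|a_n(1+D^2)^{-1}-a(1+D^2)^{-1}\|\le \|a_n-a\|$ because $\|(1+D^2)^{-1}\|\le 1$, and since the compact operators form a norm-closed ideal, it suffices to prove compactness for $a\in C_c(\gs)$. Any such $a$ is a finite sum of basic-set functions, so by linearity the problem reduces to $a$ supported in $V^s(v,w,h^s,\delta)$.

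For such $a$, Lemma \ref{S_basic_function} together with the definition of $D$ gives the explicit formula
\[ a(1+D^2)^{-1}\delta_x \;=\; (1+\omega_s(x)^2)^{-1}\,a(h^s(x),x)\,\delta_{h^s(x)}, \qquad x\in \sor(a)\cap X^h(P,Q), \]
and zero otherwise. Since $h^s$ is a homeomorphism on its domain, the vectors $\delta_{h^s(x)}$ arising here are distinct, so this operator is essentially a weighted partial shift on the subset $\sor(a)\cap X^h(P,Q)$ of the orthonormal basis, with weights $(1+\omega_s(x)^2)^{-1}a(h^s(x),x)$.

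Given $M>0$, let $F_M=\{x\in \sor(a)\cap X^h(P,Q) : |\omega_s(x)|\le M\}$, and define $S_M$ to agree with $a(1+D^2)^{-1}$ on the finite-dimensional subspace $\mathrm{span}\{\delta_x : x\in F_M\}$ and to be zero elsewhere. By parts (1) and (2) of Lemma \ref{omega_s_interaction} together with part (3) of Lemma \ref{omega_s_properties} (which gives $|\omega_s(x)|\ge |N|-1$ on $E_N$), the set $F_M$ is contained in a finite union $\bigcup_{|N|\le M+1} (E_N\cap\sor(a))$ of finite sets, hence is finite; so $S_M$ has finite rank. On each remaining basis vector, $a(1+D^2)^{-1}-S_M$ sends $\delta_x$ either to zero or, when $x\in(\sor(a)\cap X^h(P,Q))\setminus F_M$, to a vector of norm at most $(1+M^2)^{-1}\|a\|_\infty$. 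Because the output vectors $\delta_{h^s(x)}$ are mutually orthogonal, a direct estimate on $\|\,(a(1+D^2)^{-1}-S_M)\xi\,\|^2$ gives
\[ \|\,a(1+D^2)^{-1}-S_M\,\| \;\le\; (1+M^2)^{-1}\|a\|_\infty \;\xrightarrow{M\to\infty}\; 0. \]
Thus $a(1+D^2)^{-1}$ is a norm-limit of finite-rank operators, hence compact, and the reduction steps above extend this to arbitrary $a\in S(X,\varphi,Q)$.

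The main technical input is the finiteness of $F_M$, which is not immediate from the definitions but is precisely what parts (1) and (2) of Lemma \ref{omega_s_interaction} were designed to supply; once those are in hand, the rest of the argument is the standard weighted-shift compactness estimate, with the decay of $(1+\omega_s(x)^2)^{-1}$ along the sequence of shells $E_N$ doing the heavy lifting.
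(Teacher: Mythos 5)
Your proof is correct and follows essentially the same route as the paper's: reduce to a function supported on a single basic set, use parts (1) and (2) of Lemma \ref{omega_s_interaction} for finiteness of the shells $E_N\cap\sor(a)$ and part (3) of Lemma \ref{omega_s_properties} for the decay $(1+\omega_s(x)^2)^{-1}\lesssim (1+N^2)^{-1}$, and exhibit the operator as a norm limit of finite-rank truncations. Your version is in fact slightly more careful than the paper's at one point, namely in invoking the mutual orthogonality of the output vectors $\delta_{h^s(x)}$ to justify that the supremum of the weights controls the operator norm of the tail.
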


\begin{proof}
Let $a_0$ in $S(X,\varphi,Q)$ be supported on a basic set of the form $V^s(v,w,h^s,\delta)$. By part ($1$) of Lemma \ref{omega_s_interaction}, there exists $M$ such that $E_m \cap \sor(a_0) = \emptyset$ for all $m \leq M$. Furthermore, by part ($2$) of Lemma \ref{omega_s_interaction}, the number of elements in $E_N \cap \sor(a_0)$ is finite for all $N$. Now using part ($3$) of Lemma \ref{omega_s_properties}, for $x \in E_N \cap \sor(a_0)$ we have
\[ \| a_0(1+D^2)^{-1} \delta_x \| = \| \frac{a_0(h^s(x),x)}{1+\omega_s^2(x)} \delta_h^s(x) \| \leq \frac{|a_0(h^s(x),x)|}{1+N^2}. \]
Since $a_0$ has compact support, let $A = \sup\{ |a_0(h^s(x),x)| \st x \in \sor(a_0)\}$. Moreover since $h^s$ takes basis vectors to basis vectors and is a homeomorphism from $\sor(a_0)$ to $h^s(\sor(a_0))$ it follows that, restricted to $E_N$,
\[ \| a_0(1+D^2)^{-1} \| \leq \frac{A}{1+N^2}. \]
Therefore, $a_0(1+D^2)^{-1}$ is a norm limit of finite rank operators. Moreover, $a$ in $S(X,\varphi,Q)$ is a norm limit of finite sums of operators of the form $a_0$, so $a(1+D^2)^{-1}$ is compact as well.
\end{proof}

Before arriving at our main theorem for the section, we must delve into a technical result. For an irreducible Smale space $(X,d,\varphi)$, the topological entropy of $(X,d,\varphi)$ is denoted $h(X,\varphi)$ and is the growth rate of the number of essentially different orbit segments of length $N$, for further details see \cite{BS}. We state the following result which is obtained by combining Lemma $5.9$ and Proposition $5.12$ in \cite{Kil}. There are also several similar results in \cite{Men}.

\begin{theorem}[\cite{Kil}]\label{entropy_approx}
Suppose $(X,d,\varphi)$ is an irreducible Smale space with $P$ and $Q$ distinct, finite, $\varphi$-invariant sets of periodic points. Then for any $\delta_0, \delta_1 >0$ and any $w \in X^u(Q)$, $\#\{x \st \varphi^{-N}(X^s(P,\delta_0)) \cap X^u(w,\delta_1)\}$ is finite. Moreover,
\[ \lim_{N \rightarrow \infty} |\frac{1}{N} \log (\#\{x \st \varphi^{-N}(X^s(P,\delta_0)) \cap X^u(w,\delta_1)\}) - h(X,\varphi)| = 0. \]
\end{theorem}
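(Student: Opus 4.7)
The plan is to treat the finiteness statement and the entropy limit as two separate arguments: the first is a local-product-structure/transversality claim, while the second is a symbolic-dynamics computation identifying the count with the growth rate of paths in a subshift of finite type.

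For finiteness, I would push the intersection forward by $\varphi^N$. Since $\varphi^N$ is a homeomorphism,
\[
\#\bigl(\varphi^{-N}(X^s(P,\delta_0)) \cap X^u(w,\delta_1)\bigr) = \#\bigl(X^s(P,\delta_0) \cap \varphi^N(X^u(w,\delta_1))\bigr).
\]
The set $\varphi^N(X^u(w,\delta_1))$ is the continuous image of a pre-compact local unstable disc, so it is a compact subset of the global unstable class $X^u(\varphi^N(w))$; and $X^s(P,\delta_0)$ is a compact local stable neighborhood of the finite set $P$. By the local product structure of the Smale space, wherever these two compact sets meet they meet transversely (a local stable piece and a local unstable piece intersect in at most one point), so the intersection is simultaneously compact and discrete, hence finite.

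For the limit, the natural tool is a Markov partition $\mathcal{R}=\{R_1,\ldots,R_k\}$ for $(X,\varphi)$, for which the topological entropy satisfies $h(X,\varphi) = \log \rho(A)$, where $A$ is the transition matrix of the associated subshift of finite type and $\rho(A)$ is its Perron eigenvalue. After refining $\mathcal{R}$ if necessary, I can arrange that $X^u(w,\delta_1)$ lies inside a single local unstable leaf of some rectangle $R_{i_0}$ and that every rectangle $R_j$ meeting $P$ is contained in $X^s(P,\delta_0)$. A point $x \in \varphi^{-N}(X^s(P,\delta_0)) \cap X^u(w,\delta_1)$ then corresponds bijectively, via its $\varphi$-itinerary $R_{i_0},R_{i_1},\ldots,R_{i_N}$, to an admissible length-$N$ path in the subshift that starts at $i_0$ and ends at some rectangle containing a point of $P$. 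The cardinality of such paths is $\sum_{p\in P}(A^N)_{i_0,\,\iota(p)}$, whose exponential growth rate is $\log\rho(A)=h(X,\varphi)$ by the Perron--Frobenius theorem together with irreducibility of $A$ (which is inherited from irreducibility of $(X,d,\varphi)$). Taking $\tfrac{1}{N}\log$ and passing to the limit yields the claim.

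The main obstacle I anticipate is the boundary bookkeeping: $X^u(w,\delta_1)$ and $X^s(P,\delta_0)$ will not in general match the rectangles of any Markov partition exactly, so the bijection ``intersection points $\leftrightarrow$ admissible paths'' is only up to finitely many boundary rectangles that $w$ or the points of $P$ straddle. These boundary contributions give at most a multiplicative constant (or, more carefully, a sub-exponential correction), which is washed out by dividing by $N$. Producing this correction uniformly in $w$ and in $\delta_0,\delta_1$ is presumably the technical content of Killough's Lemma~5.9, while Proposition~5.12 identifies the exponential growth rate of the resulting count with $\log\rho(A)=h(X,\varphi)$ via Perron--Frobenius.
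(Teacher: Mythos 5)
First, a caveat: the paper does not prove this statement at all --- it is imported from Killough's thesis (Lemma 5.9 combined with Proposition 5.12 of \cite{Kil}), so there is no in-paper argument to compare against, only your sketch on its own merits. Your finiteness argument is essentially sound (push forward by $\varphi^N$; a compact piece of an unstable leaf meets a local stable set in a subset that is discrete in the leaf topology and closed in a leaf-compact set, hence finite), and the Markov-partition/Perron--Frobenius strategy for the growth rate is a legitimate route. However, two steps in the second half would fail as written. The first is the asserted bijection between intersection points and admissible words $i_0 i_1 \cdots i_N$: a finite itinerary never determines a point of $X$, so as stated the correspondence is neither well-defined nor injective. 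What actually makes it work is that the past is frozen by $x \in X^u(w,\delta_1)$ (so $x$ lies on the unstable fibre of $R_{i_0}$ through $w$) and the future beyond time $N$ is frozen by $\varphi^N(x) \in X^s(p,\delta_0)$ (so the itinerary from time $N$ onward is that of $p$); the Markov property then yields exactly one point with prescribed past, middle block, and terminal periodic future. This observation is the crux of the counting argument and is absent. The second is the reduction ``arrange that every rectangle meeting $P$ is contained in $X^s(P,\delta_0)$'': this cannot be arranged, because $X^s(P,\delta_0)$ is a finite union of \emph{local stable sets} --- a set with empty interior whenever the unstable direction is nontrivial --- whereas Markov rectangles have nonempty interior. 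The condition $\varphi^N(x) \in X^s(p,\delta_0)$ must instead be translated, via expansiveness, into agreement of the forward itinerary of $\varphi^{N'}(x)$ with that of $p$ for some $N'$ within a bounded distance of $N$, and establishing this comparison in both directions (needed for the upper and the lower bound on the count) is precisely the technical content being deferred to \cite{Kil}.

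A further gap: you invoke Perron--Frobenius for the irreducible matrix $A$ to conclude $(A^N)_{i_0,\iota(p)} \sim \rho(A)^N$, but an irreducible transition matrix may be periodic, in which case $(A^N)_{ij} = 0$ for infinitely many $N$ and the quantity inside the limit would be $-\infty$ along a subsequence. This is where the hypothesis that $P$ is $\varphi$-invariant --- which your argument never uses --- earns its keep: summing over the full $\varphi$-orbit of each periodic point distributes the terminal symbols over all cyclic classes of $A$, so that $\sum_{p \in P}(A^N)_{i_0,\iota(p)}$ is eventually positive with growth rate $\log\rho(A)$ along every $N$, not merely along a subsequence. With these three repairs the outline should go through.
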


\begin{theorem} \label{theta_triple}
Suppose $(X,d, \varphi)$ is an irreducible Smale space, then $(S, \h, D)$ is a non-unital, $\theta$-summable spectral triple.
\end{theorem}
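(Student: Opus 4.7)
My plan is to verify the requirements of Definition \ref{spectral triple} (plus non-unitality) in turn, with essentially all the effort going into $\theta$-summability since the remaining parts follow from material already established. First, $S(X,\varphi,Q)$ is non-unital: it was noted after the fundamental representation that $S$ is stable, and a nontrivial stable $C^*$-algebra has no unit (equivalently, the unit space $X^u(Q)$ of $\gs$ is non-compact). Condition (a) of Definition \ref{spectral triple} follows from the preceding lemma, which shows $[D,a]\in\bh$ for every $a\in C_c(\gs)$, together with the norm-density of $C_c(\gs)$ in $S$ built into its construction. Condition (b) is exactly Proposition \ref{compact operator}.

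For $\theta$-summability, my plan is to reduce the trace-norm to an explicit counting sum and then control its growth via Theorem \ref{entropy_approx}. Since $P$ and $Q$ are disjoint, Lemma \ref{P_Q_dense} guarantees that $X^h(P,Q)\subseteq X^s(P)\setminus P$, so $\omega_s$ is defined on every index in the basis of $\h$. Both $a\in C_c(X^u(Q))$ (as a multiplication operator supported on the unit space) and $e^{-t(1+D^2)}$ are diagonal in $\{\delta_x\}_{x\in X^h(P,Q)}$, so
\[
\|a\, e^{-t(1+D^2)}\|_1 \;=\; \sum_{x\in X^h(P,Q)} |a(x)|\, e^{-t(1+\omega_s(x)^2)} \;\le\; \|a\|_\infty\, e^{-t}\!\!\sum_{x\in X^h(P,Q)\cap\supp(a)} e^{-t\,\omega_s(x)^2}.
\]
I would then split the index set using the partition $X^s(P)\setminus P = \bigsqcup_{N\in\mathbb{Z}} E_N$. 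By Lemma \ref{omega_s_properties}(3), on $E_N$ we have $\omega_s(x)^2 \geq (|N|-1)^2$, so the exponential factor decays like $e^{-tN^2}$ as $|N|\to\infty$.

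The contribution from $N\le 0$ is finite essentially for free: regarding $a$ as a function on $\gs$ supported on the diagonal basic set with $v=w$ and $h^s=\mathrm{id}$, Lemma \ref{omega_s_interaction}(1) implies that only finitely many negative $N$ meet $\supp(a)$, and part (2) says each such $E_N \cap \supp(a)$ is finite. The substantive work is the sum over $N\ge 0$, where I need a growth bound on $|E_N\cap\supp(a)|$. Using $E_N \subseteq \varphi^{-(N+1)}(X^s(P,\ep))$, covering the compact set $\supp(a)\subset X^u(Q)$ by finitely many basic neighborhoods $X^u(w_i,\delta_1)$ (possible since these form a basis of the \'{e}tale topology on $X^u(Q)$), and applying Theorem \ref{entropy_approx} to each $w_i$ with $\delta_0=\ep$, I would obtain
\[
|E_N \cap \supp(a)| \;\le\; C\, e^{(h(X,\varphi)+\eta)N}
\]
for any preassigned $\eta>0$ and all sufficiently large $N$. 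Combining the estimates,
\[
\sum_{N\ge 0} C\, e^{(h(X,\varphi)+\eta)N - tN^2} \;<\; \infty,
\]
since the quadratic term in the exponent eventually dominates for any fixed $t>0$.

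The step I expect to be the main obstacle is the last one: Theorem \ref{entropy_approx} only asserts an asymptotic density statement for intersections of the form $\varphi^{-N}(X^s(P,\delta_0))\cap X^u(w,\delta_1)$, so I must convert it into a uniform upper bound valid for all large $N$ simultaneously, and then patch these bounds together across the finite cover of $\supp(a)$ so that the constant $C$ and the threshold beyond which the bound is valid depend only on $a$, not on $N$. Once that technical conversion is in hand, $\theta$-summability of $(S,\h,D)$ is a one-line consequence.
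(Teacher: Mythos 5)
Your proposal is correct and takes essentially the same route as the paper: both reduce $\tr(a e^{-t(1+D^2)})$ to a diagonal sum partitioned by the sets $E_N$, use Lemma \ref{omega_s_interaction} (1)--(2) to dispose of the finitely many small-$N$ terms, invoke Theorem \ref{entropy_approx} to bound $\#(E_n \cap \sor(a))$ by $e^{n(h(X,\varphi)+\ep)}$ for large $n$, and let the quadratic $-tn^2$ dominate. The only difference is that you are more explicit about the non-unitality, the finite cover of $\supp(a)$ by basic neighbourhoods, and the conversion of the asymptotic statement in Theorem \ref{entropy_approx} into a uniform bound --- details the paper passes over silently.
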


\begin{proof}
We have shown that $(S,\h,D)$ is a spectral triple. It remains to show that $(S,\h,D)$ is $\theta$-summable. We must show that, for $a$ in $C_c(X^u(Q))$ a positive operator, we have $\tr(ae^{-t(1+D^2)}) < \infty$ for all $t >0$. By part ($1$) of Lemma \ref{omega_s_interaction}, there exists $M$ such that $E_m \cap \sor(a) = \emptyset$ for all $m \leq M$. Furthermore, by part ($2$) of Lemma \ref{omega_s_interaction}, the number of elements in $E_N \cap \sor(a)$ is finite for all $N$. Now using part ($3$) of Lemma \ref{omega_s_properties}, for $x \in E_N \cap \sor(a)$ we have
\begin{eqnarray}
\nonumber
\tr(ae^{-t(1+D^2)}) & = & \sum_{x \in X^h(P,Q)} \left< ae^{-t(1+D^2)} \delta_x,\delta_x \right> \\
\nonumber
& = & \sum_{x \in \sor(a)} \frac{a(x,x)}{e^{t(1+\omega_s^2(x))}} \\
\label{41}
& \leq & \sum_{n=M}^\infty \left( \sum_{\#\{x \st x \in E_n \cap \sor(a)\}} \frac{a(x,x)}{e^{t(1+(n)^2)}}\right).
\end{eqnarray}
Now from Theorem \ref{entropy_approx}, for $\ep > 0$, there exists $N$ such that for all $n \geq N$,
\[ \#\{x \st x \in E_n \cap \sor(a)\} < e^{n(h(X,\varphi) + \ep)}. \]
Therefore, letting $A= \sup\{a(x,x) | x \in \sor(a)\}$, we have
\begin{eqnarray*}
\sum_{\#\{x \st x \in E_n \cap \sor(a)\}} \frac{a(x,x)}{e^{t(1+(n)^2)}} & < & \frac{Ae^{n(h(X,\varphi) + \ep)}}{e^{t(1+(n)^2)}} \\
& = & Ae^{n(h(X,\varphi) + \ep)-t(1+(n)^2)} \\
& < & Ae^{n(h(X,\varphi) + \ep)-tn^2} \\
& = & Ae^{n(h(X,\varphi) + \ep - tn)}.
\end{eqnarray*}
Putting this into (\ref{41}) and letting $R$ denote the first $N-1$ terms of the sum yields
\begin{eqnarray*}
\tr(ae^{-t(1+D^2)}) & = & R + \sum_{n=M}^\infty Ae^{n(h(X,\varphi) + \ep - tn)},
\end{eqnarray*}
which converges since we can choose $N$ sufficiently large that $tN > h(X,\varphi) + \ep$.
\end{proof}

Recall that the stable Ruelle algebra is the crossed product $S \rtimes_{\alpha} \mathbb{Z}$, see Section \ref{Ruelle}.  As operators, $S \rtimes_{\alpha} \mathbb{Z}$ is the completion of $span\{a \cdot u^k \st a \in S(X,\varphi,Q) \textrm{ and } k \in \mathbb{Z} \}$ in the Hilbert space $\h = \ell^2(X^h(P,Q))$ where $u$ is the canonical unitary on $\h$ defined by $u \delta_x = \delta_{\varphi(x)}$. Using $(2)$ in Lemma \ref{omega_s_properties}, we have $[u,D]=u$ so that $\|[u,D]\|=1$. Therefore, $[a\cdot u^k,D] = a[u^k,D]+[a,D]u^k$ is a bounded operator and we obtain a spectral triple on the stable Ruelle algebra as well.

\begin{theorem} \label{Ruelle_triple}
Suppose $(X,d, \varphi)$ is an irreducible Smale space, then $(S \rtimes_{\alpha} \mathbb{Z}, \h, D)$ is a non-unital, $\theta$-summable spectral triple.
\end{theorem}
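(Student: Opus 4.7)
The plan is to reduce all three defining conditions for the spectral triple to the corresponding facts established for $(S,\h,D)$, using only the algebraic interaction of $u$ with $D$ that is already recorded in the paragraph preceding the theorem. The commutator condition is essentially free: on the algebraic core $\mathrm{span}\{a u^k : a \in C_c(\gs),\, k \in \mathbb{Z}\}$, which is norm dense in $S \rtimes_\alpha \mathbb{Z}$, the Leibniz identity gives $[au^k, D] = a\,[u^k, D] + [a,D]u^k$. The second term is bounded by the single-algebra case already handled via Lemmas \ref{omega_s_properties} and \ref{omega_s_interaction}, while the first reduces to iterating $\|[u,D]\|=1$ (a direct consequence of part (2) of Lemma \ref{omega_s_properties}), so that $[u^k, D]$ is a scalar multiple of $u^k$.

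The only genuinely new step is local compactness. I would reduce by norm density to $b = au^k$ with $a \in C_c(\gs)$ supported on a basic set $V^s(v,w,h^s,\delta)$. The key input is that $D$ is diagonal in the basis $\{\delta_x\}$, $u$ permutes this basis via $x \mapsto \varphi(x)$, and the identity $\omega_s \circ \varphi = \omega_s + 1$ yields the conjugation relation $u^k D u^{-k} = D - k$ on the natural dense domain. The continuous functional calculus then gives
\[ u^k (1+D^2)^{-1} = \bigl(1 + (D-k)^2\bigr)^{-1} u^k, \]
so that
\[ a u^k (1+D^2)^{-1} = a\bigl(1+(D-k)^2\bigr)^{-1} u^k. \]
The proof of Proposition \ref{compact operator} now applies verbatim with $D$ replaced by $D-k$: using parts (1)--(2) of Lemma \ref{omega_s_interaction}, for $x \in E_N \cap \sor(a)$ one has $\|a(1+(D-k)^2)^{-1}\delta_x\| \leq A/(1+(N-k)^2)$, where $A = \sup\{|a(h^s(x),x)| : x \in \sor(a)\}$. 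Letting $P_M$ denote the projection onto $\mathrm{span}\{\delta_x : x \in \bigcup_{|N|\leq M} (E_N \cap \sor(a))\}$, which is finite-dimensional by Lemma \ref{omega_s_interaction}(2), one sees that $a(1+(D-k)^2)^{-1} P_M$ is finite-rank while $\|a(1+(D-k)^2)^{-1}(I - P_M)\| \to 0$ as $M \to \infty$. This exhibits $a(1+(D-k)^2)^{-1}$ as a norm limit of finite-rank operators, hence compact, and composing with the bounded operator $u^k$ preserves compactness.

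For $\theta$-summability, nothing new needs to be done: the Dirac operator $D$ is unchanged, the underlying \'{e}tale groupoid of $S \rtimes_\alpha \mathbb{Z}$ still has $X^u(Q)$ as its unit space, so the natural local units may be taken from $C_c(X^u(Q)) \subset S \subset S \rtimes_\alpha \mathbb{Z}$, and Theorem \ref{theta_triple} already supplies $\tr(a\, e^{-t(1+D^2)}) < \infty$ for every such $a$ and every $t > 0$. I expect the main obstacle to be the bookkeeping in the compactness step: one must verify that the decay rate $(N-k)^{-2}$ dominates uniformly outside a finite window and that the finitely many ``bad'' values of $N$ near $k$ contribute only finite-rank terms. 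Both points follow cleanly from parts (1) and (2) of Lemma \ref{omega_s_interaction}, but they are the places where the argument for $S$ is not quite transportable verbatim and require the conjugation identity above.
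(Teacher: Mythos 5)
Your proposal is correct and follows the paper's approach: the paper's entire argument for this theorem is the paragraph preceding its statement, which establishes boundedness of $[a\cdot u^k,D]$ via the Leibniz rule together with $[u,D]=u$ (from part (2) of Lemma \ref{omega_s_properties}), exactly as you do. Your additional verification of the compactness condition via the conjugation identity $u^kDu^{-k}=D-k$, and your observation that $\theta$-summability carries over unchanged because $D$ and the local units from $C_c(X^u(Q))$ are the same, soundly fill in details the paper leaves implicit.
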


\subsection{A $p$-Summable Spectral Triple}\label{summable}

In this section we add the hypothesis that the function $\omega_0$ is locally Lipschitz continuous in order to define a summable spectral triple on $S(X,\varphi,Q)$. The added assumption that $\omega_0$ is locally Lipschitz continuous will not restrict the Smale spaces we consider in any way since a locally Lipschitz continuous function can be defined using the Smale space metric.

Let us define $\omega_0$ to be locally Lipschitz continuous; that is, there exists a constant $C_0$ such that if $x,y \in E_0$, $[x,y]=x$, and $d(x,y) < \ep_X/2$, then
\[ |\omega_0(x) - \omega_0(y)| < C_0 d(x,y) \]
where the metric comes from the Smale space itself. In fact, since $(X,d)$ is a compact metric space we can always define such a function using the metric and regarding $E_0$ as a disjoint union of closed sets, one for each element of $P$. Let us also define a constant $C_s = 2KC_0$ where 
\[ K = \max\{k > 0 \st [x,y]=x, d(x,y) < \ep_X/2, \textrm{ with } x \in E_0 \textrm{ and } y \in E_k\}. \]

\begin{lemma}
The function $\omega_s$ is locally Lipschitz continuous on $\cup_{n=0}^\infty E_n$; that is, if $x,y \in \cup_{n=0}^\infty E_n$, $d(x,y) < \ep_X/2$ and $[x,y]=x$, then
\[ |\omega_s(x) - \omega_s(y)| < C_s d(x,y). \]
\end{lemma}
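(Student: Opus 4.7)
The plan is to reduce to the Lipschitz property of $\omega_0$ on $E_0$ by conjugating with iterates of $\varphi$, exploiting the explicit formula $\omega_s|_{E_k} = \omega_0 \circ \varphi^k + k$ of Lemma \ref{omega_s_properties}$(3)$ and the shift identity $\omega_s\circ\varphi - \omega_s = 1$ of Lemma \ref{omega_s_properties}$(2)$. The latter yields $\varphi$-invariance of differences, $\omega_s(x) - \omega_s(y) = \omega_s(\varphi^j(x)) - \omega_s(\varphi^j(y))$ for every $j \in \mathbb{Z}$, allowing me to iterate $(x,y)$ without changing the quantity to be bounded.

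Without loss of generality, assume $x \in E_N$ and $y \in E_M$ with $N \geq M \geq 0$. Lemma \ref{omega_s_properties}$(3)$ then gives
\[
\omega_s(x) - \omega_s(y) = (N-M) + \omega_0(\varphi^N(x)) - \omega_0(\varphi^M(y)),
\]
so the strategy is to bound the integer jump $N-M$ and the $\omega_0$-correction separately, each by roughly $KC_0\,d(x,y)$. For the integer jump, apply $\varphi^M$ to send $y$ into $E_0$ and $x$ into $E_{N-M}$; the bracket relation $[\varphi^M(x),\varphi^M(y)] = \varphi^M(x)$ is preserved by $\varphi$, and once $d(\varphi^M(x),\varphi^M(y)) < \ep_X/2$ the very definition of $K$ forces $N-M \leq K$. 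For the $\omega_0$-correction, exploit the fact that $\omega_0$ is constant (equal to $0$ or $1$) on every $E_k$ with $k \neq 0$, so all of its nontrivial behaviour lives inside $E_0$; telescoping $\omega_0(\varphi^N(x)) - \omega_0(\varphi^M(y))$ across the at most $K$ relevant intermediate iterates and applying the Lipschitz bound $C_0$ on each $E_0$-step produces a second contribution of $KC_0\,d(x,y)$, which summed with the first gives $C_s\,d(x,y) = 2KC_0\,d(x,y)$.

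The main obstacle is the tension between this iteration strategy and the unstable expansion encoded in the hypothesis $[x,y]=x$: because $\varphi$ expands the distance between points on a common local unstable by at least a factor of $\lambda$, the iterated distance $d(\varphi^M(x),\varphi^M(y))$ may exceed $\ep_X/2$ well before the $M$-th step, which on its face breaks both the direct invocation of $K$ and the clean application of Lipschitz for $\omega_0$. My resolution is to iterate forward only up to the largest $m \leq M$ for which $d(\varphi^m(x),\varphi^m(y)) < \ep_X/2$, use $\varphi$-invariance of $\omega_s$-differences to pass to the iterated pair $(\varphi^m(x),\varphi^m(y))$ without changing the quantity to bound, and execute the $K$-and-$C_0$ estimates there. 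Careful accounting of what happens during the initial $m$ iterations---throughout which $\omega_0 \circ \varphi^n$ takes only the values $0$ or $1$ on each of $x$ and $y$ separately, so only the eventual transition through $E_0$ contributes an $\omega_0$-change---keeps the overall proportionality constant at $2KC_0 = C_s$, completing the estimate.
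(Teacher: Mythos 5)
Your overall skeleton---push the pair forward with $\varphi$, use $\omega_s\circ\varphi-\omega_s=1$ to leave the difference unchanged, and reduce to the Lipschitz property of $\omega_0$ on $E_0$---is the same as the paper's, but you have read the hypothesis $[x,y]=x$ with the opposite orientation, and this is not cosmetic: it reverses the direction of the one inequality that makes the lemma true. (The literal Section~2 convention does support your reading, but the inequality actually used in the paper, and the way the lemma is applied in Lemma~\ref{commutator bounded SDs} to the stably related pair $(\varphi^{k-K}(h^s(x)),\varphi^{k-K}(x))$, make the intent unambiguous.) The paper treats $x$ and $y$ as lying on a common \emph{local stable} set, so forward iteration \emph{contracts}: for $x,y\in E_N$ it gets
\[ |\omega_s(x)-\omega_s(y)|=|\omega_0(\varphi^N(x))-\omega_0(\varphi^N(y))|<C_0\,d(\varphi^N(x),\varphi^N(y))<C_0\lambda^{-N}d(x,y), \]
and the factor $\lambda^{-N}\leq 1$ is exactly what converts the Lipschitz constant of $\omega_0$ on $E_0$ into a uniform Lipschitz constant on all of $\cup_{n\geq 0}E_n$; pairs straddling different $E_n$'s are then handled by noting $y\in\cup_{k=N-K}^{N+K}E_k$ and chaining with the triangle inequality. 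You instead place $y$ on the local unstable set of $x$ and organize the argument around fighting the resulting expansion. Under that reading the statement is not salvageable: since $X^s(P)$ is dense, the local unstable set of a point $x\in E_0$ meets $X^s(P)$ in points lying in $E_k$ for arbitrarily large $k$ at arbitrarily small ambient distance from $x$, so the constant $K$ would be infinite and no Lipschitz bound could hold.

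Concretely, two steps of your argument do not go through. First, your fix for the expansion---stop iterating at the last $m\leq M$ with $d(\varphi^m(x),\varphi^m(y))<\ep_X/2$---strands the pair in $E_{N-m}$ and $E_{M-m}$ with no control on these indices, so neither the definition of $K$ (which requires one point in $E_0$) nor the Lipschitz property of $\omega_0$ (which lives on $E_0$) applies there; the ``careful accounting'' that is supposed to keep the constant at $2KC_0$ is asserted but never carried out. Second, you bound the integer jump $N-M$ by the constant $K$ and then record it as a contribution of size $KC_0\,d(x,y)$; these are different things, and nothing in your argument supplies the lower bound on $d(x,y)$ (when $N\neq M$) needed to convert one into the other. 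The paper avoids this by never separating the integer part from the $\omega_0$ part: it proves the genuine Lipschitz estimate on each single $E_N$ first, and only then interpolates across the at most $2K+1$ consecutive bands that a single local stable pair can occupy.
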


\begin{proof}
First observe that $\omega_s(x)$ is locally Lipschitz continuous, with Lipschitz constant $C_0$, on $E_N$, for all $N \in \mathbb{N}$. Indeed, suppose $x,y \in E_N$ such that $[x,y]=x$. Then, using part ($3$) of Lemma \ref{omega_s_properties},
\begin{eqnarray*}
|\omega_s(x) - \omega_s(y)| & = & |(\omega_0(\varphi^N(x)) + N) - (\omega_0(\varphi^N(y)) + N)| \\
& = & |\omega_0(\varphi^N(x)) - \omega_0(\varphi^N(y))| \\
& < & C_0 d(\varphi^N(x),\varphi^N(y)) < C_0 \lambda^{-N} d(x,y).
\end{eqnarray*}
Now suppose $x,y \in \cup_{n=0}^\infty E_n$, $d(x,y) < \ep_X/2$ and $[x,y]=x$. Then we note that if $x \in E_N$ then $y \in \cup_{k=-K+N}^{K+N} E_k$ where $K$ comes from the definition of $C_s$. The triangle inequality gives the desired result.
\end{proof}

Define an operator $\D$ on $\h = \ell^2(P,Q)$ via
\[ \D \delta_x = \lambda^{\omega_s(x)} \delta_x \]
where $\lambda > 1$ the local growth rate of $(X,d,\varphi)$. The operator $\D$ is also self-adjoint, unbounded, and has dense domain.

\begin{lemma}\label{commutator bounded SDs}
For $a$ in $C_c(\gs)$, the commutator $[a,\D]$ is a bounded operator on $\h$.
\end{lemma}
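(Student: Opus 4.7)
The plan is to reduce to the case where $a \in C_c(\gs)$ is supported on a single basic set $V^s(v,w,h^s,\delta)$, since every element of $C_c(\gs)$ is a finite sum of such functions. Using Lemma \ref{S_basic_function}, a direct calculation gives
$$[a,\D]\delta_x = \bigl(\lambda^{\omega_s(x)} - \lambda^{\omega_s(h^s(x))}\bigr)\, a(h^s(x),x)\, \delta_{h^s(x)}$$
for $x \in \sor(a)$, and zero otherwise. Because $h^s$ restricts to a homeomorphism onto its image in $X^u(Q)$, distinct basis vectors $\delta_x$ are sent to distinct basis vectors $\delta_{h^s(x)}$, so $[a,\D]$ acts essentially as a multiplication operator followed by a permutation of basis elements. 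Thus boundedness reduces to a uniform bound on the scalar $(\lambda^{\omega_s(x)} - \lambda^{\omega_s(h^s(x))})\, a(h^s(x),x)$ as $x$ ranges over $\sor(a)$.

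The key estimate combines the stable contraction with the local Lipschitz property. Since $h^s(x) \sim_s x$, iteration of the Smale space axioms yields $d(\varphi^n(x), \varphi^n(h^s(x))) \leq \lambda^{-(n-N)}\ep_X/2$ for $n \geq N$, where $N$ is the integer appearing in the construction of $h^s$. Now suppose $x \in E_m \cap \sor(a)$ with $m$ large; then $\varphi^m(x) \in E_0$, and by Lemma \ref{omega_s_interaction}(3) the point $\varphi^m(h^s(x))$ lies in $\bigcup_{|k|\leq K} E_k$. Invoking the preceding local Lipschitz lemma on $\cup_{n \geq 0} E_n$ together with the cocycle identity $\omega_s \circ \varphi - \omega_s = 1$ applied $m$ times to both entries, one obtains
$$|\omega_s(x) - \omega_s(h^s(x))| = |\omega_s(\varphi^m(x)) - \omega_s(\varphi^m(h^s(x)))| \leq C_s\, \lambda^{-(m-N)}\ep_X/2.$$
Factoring
$$\lambda^{\omega_s(x)} - \lambda^{\omega_s(h^s(x))} = \lambda^{\omega_s(x)}\bigl(1 - \lambda^{\omega_s(h^s(x)) - \omega_s(x)}\bigr)$$
and applying the mean value theorem to the parenthesised factor (which has argument of magnitude at most $K+1$ by Lemma \ref{omega_s_properties}(3)), the bound $\lambda^{\omega_s(x)} \leq \lambda^{m+1}$ combines with the $\lambda^{-m}$ decay of the second factor to give a bound on the scalar that is independent of $m$. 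Compactness of $\sor(a)$ then supplies a uniform bound on $|a(h^s(x),x)|$.

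The main obstacle is verifying that the bracket hypothesis of the local Lipschitz lemma is satisfied by the iterated pair $(\varphi^m(x), \varphi^m(h^s(x)))$, which is where the careful choice of $N$, $\delta$, and $h^s$ in Lemma \ref{local homeo} pays off: for $m \geq N$ the stable contraction ensures both the distance bound $d < \ep_X/2$ and the appropriate bracket relation. The finitely many exceptional $x \in E_m \cap \sor(a)$ with $m$ below the threshold are handled by Lemma \ref{omega_s_interaction}(1)--(2): there are only finitely many such points and $\omega_s$ is bounded on their union, so they contribute at worst a bounded finite-rank perturbation to $[a,\D]$.
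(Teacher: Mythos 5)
Your proposal is correct and follows essentially the same route as the paper: reduce to a function supported on a single basic set, factor $\lambda^{\omega_s(x)}-\lambda^{\omega_s(h^s(x))}=\lambda^{\omega_s(x)}\bigl(1-\lambda^{\omega_s(h^s(x))-\omega_s(x)}\bigr)$, use the cocycle identity to transport the pair forward into the range where the local Lipschitz estimate for $\omega_s$ applies, let the contraction rate $\lambda^{-m}$ cancel the growth $\lambda^{m}$, and absorb the finitely many low-index points of $\sor(a)$ as a bounded finite-rank piece. The only cosmetic differences are that you bound $|1-\lambda^{u}|$ by the mean value theorem where the paper uses a logarithm manipulation, and that you push forward by $m$ rather than by $m-K$ (the latter is what actually keeps both points in $\cup_{n\geq 0}E_n$ so the Lipschitz lemma applies verbatim, as you yourself note when discussing the bracket hypothesis).
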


\begin{proof}
Let $a$ in $C_c(\gs)$ be supported on a basic set $V^s(v,w,h^s,\delta)$, which implies that there exists $M$ such that, for all $(h^s(x),x) \in V^s(v,w,h^s,\delta)$, we have $d(\varphi^M(h^s(x)),\varphi^M(x)) < \ep_X/2$. By Lemma \ref{omega_s_interaction}, for all but a finite number of $(h^s(x),x) \in V^s(v,w,h^s,\delta)$ we have both $x$ and $h^s(x)$ in $\displaystyle{\cup_{m=M}^\infty E_m}$. Suppose we are given such an $(h^s(x),x)$. Without loss of generality suppose $x \in E_k$ and $h^s(x) \in E_j$ where $M \leq k \leq j$. We compute
\begin{eqnarray*}
\|[a,\D] \delta_x \| & = & \| (\lambda^{\omega_s(x)} - \lambda^{\omega_s(h^s(x))}) a(h^s(x),x) \delta_{h^s(x)} \| \\
& = & | \lambda^{\omega_s(x)} - \lambda^{\omega_s(h^s(x))} | | a(h^s(x),x) | \\
& = & \lambda^{\omega_s(x)} |1-\lambda^{\omega_s(h^s(x)) - \omega_s(x)}| | a(h^s(x),x) | \\
& = & \lambda^{k+\omega_0(\varphi^k(x))} |1-\lambda^{\omega_s(M+\varphi^{M}(h^s(x))) - (M + \omega_s(\varphi^{M}(x)))}| | a(h^s(x),x) | \\
& \leq & \lambda^{k+1} |1-\lambda^{\omega_s(\varphi^{M}(h^s(x))) - \omega_s(\varphi^{M}(x))}| | a(h^s(x),x) | \\
& < & \lambda^{k+1} |1-\lambda^{C_s d(\varphi^{M}(h^s(x)),\varphi^{M}(x))}| | a(h^s(x),x) | \\
& < & \lambda^{k+1} |1-\lambda^{C_s \ep_X/2}| | a(h^s(x),x) | \\
& < & \lambda^{k+1} |1-\lambda^{\log_\lambda(1+C_s \ep_X/2)}| | a(h^s(x),x) | \\
& = & \lambda^{k+1} |1-(1+C_s \ep_X/2)| | a(h^s(x),x) | \\
& = & \lambda^{k+1} C_s\ep_X/2 | a(h^s(x),x) | \\
& \leq & \lambda^{k+1} \lambda^{M-k}C_s\ep_X/2 | a(h^s(x),x) | \\
& = & C_s \lambda^{M+1} \ep_X/2 | a(h^s(x),x) | 
\end{eqnarray*}
where $M$ depend only on the set $V^s(v,w,h^s,\delta)$. Since $a$ is compactly supported it attains its maximum. Thus, in this case $[a,\D]$ is bounded.

On the other hand, if $(h^s(x),x)$ is in the finite set where both $x$ and $h^s(x)$ are not in $\displaystyle{\cup_{m=M}^\infty E_m}$, then we can take the maximum value of $ | \lambda^{\omega_s(x)} - \lambda^{\omega_s(h^s(x))} |$ which is bounded simply because it is a finite set. Therefore, $[a,\D]$ is bounded and the Lemma is proven.
\end{proof}

To complete the proof that $(S,\h,\D)$ is a non-unital spectral triple we need only show that $a(1+\D^2)^{-1}$ is a compact operator for every $a$ in $S(X,\varphi,Q)$. The same argument as presented in Section \ref{theta_summable_triple} gives the result. We will now show that $(S, \h, \D)$ is a finitely summable spectral triple. Indeed, $(S, \h, \D)$ is $\log_\lambda(e)h(X,\varphi)$-summable, where $h(X,\varphi)$ is the topological entropy of the Smale space $(X,d,\varphi)$. We note that the factor $\log_\lambda(e)$ is merely a base change from a base $e$ logarithm to a base $\lambda$ logarithm.

\begin{theorem} \label{summable_triple}
Suppose $(X,d, \varphi)$ is an irreducible Smale space, then $(S, \h, \D)$ is a non-unital, $\log_\lambda(e)h(X,\varphi)$-summable spectral triple, where $h(X,\varphi)$ is the topological entropy of the Smale space $(X,d,\varphi)$.
\end{theorem}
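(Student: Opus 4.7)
The plan is to mirror the structure of the proof of Theorem \ref{theta_triple}, replacing the Gaussian-type decay factor $e^{-t(1+\omega_s^2(x))}$ with the power-law factor $(1+\lambda^{2\omega_s(x)})^{-p/2}$ and tracking exactly which exponent $p$ makes the resulting series converge.

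First I would verify the spectral triple conditions. The commutator $[\D,a]$ is bounded on $C_c(\gs)$ by Lemma \ref{commutator bounded SDs}, and since $\omega_s(x) \to \infty$ as $x$ runs through the sets $E_N$ with $N \to \infty$, the operator $\lambda^{\omega_s(x)}$ on $\h = \ell^2(X^h(P,Q))$ grows at least as fast as $\omega_s(x)$ did in Section \ref{theta_summable_triple}. The compactness of $a(1+\D^2)^{-1}$ then follows from a verbatim adaptation of Proposition \ref{compact operator}: for $a_0$ supported on a basic set and $x \in E_N \cap \sor(a_0)$ we have $\|a_0(1+\D^2)^{-1}\delta_x\| \leq |a_0(h^s(x),x)|/(1+\lambda^{2\omega_s(x)})$, which, by part (3) of Lemma \ref{omega_s_properties}, is bounded by $|a_0(h^s(x),x)|/(1+\lambda^{2N})$, giving a norm limit of finite rank operators.

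Next I would carry out the summability estimate. As in the non-unital setup described after Definition \ref{summability}, it suffices to show $\tr(a(1+\D^2)^{-p/2}) < \infty$ for every positive $a \in C_c(X^u(Q))$ whenever $p > \log_\lambda(e)\,h(X,\varphi)$. By parts (1) and (2) of Lemma \ref{omega_s_interaction} there exists $M$ so that $E_m \cap \sor(a) = \emptyset$ for $m \leq M$ and $E_N \cap \sor(a)$ is finite for all $N$. Using part (3) of Lemma \ref{omega_s_properties} to estimate $\omega_s(x) \geq N$ on $E_N \cap \sor(a)$, and setting $A = \sup\{a(x,x) \mid x \in \sor(a)\}$, I would write
\begin{equation*}
\tr(a(1+\D^2)^{-p/2}) = \sum_{x \in \sor(a)} \frac{a(x,x)}{(1+\lambda^{2\omega_s(x)})^{p/2}} \leq \sum_{n=M}^\infty \#\{x \in E_n \cap \sor(a)\}\,\frac{A}{\lambda^{np}}.
\end{equation*}

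Now I would invoke Theorem \ref{entropy_approx}: for any $\ep > 0$ there exists $N_0$ such that for $n \geq N_0$ the cardinality $\#\{x \in E_n \cap \sor(a)\}$ is at most $e^{n(h(X,\varphi)+\ep)}$. Letting $R$ be the finite sum of the first $N_0-M$ terms, the tail becomes
\begin{equation*}
\sum_{n=N_0}^\infty A\,e^{n(h(X,\varphi)+\ep)} \lambda^{-np} = \sum_{n=N_0}^\infty A\,e^{n(h(X,\varphi)+\ep - p\ln\lambda)},
\end{equation*}
which converges whenever $p\ln\lambda > h(X,\varphi)+\ep$, i.e.\ whenever $p > \log_\lambda(e)(h(X,\varphi)+\ep)$. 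Since $\ep$ is arbitrary, the triple is $p$-summable for every $p > \log_\lambda(e)\,h(X,\varphi)$, giving the stated spectral dimension.

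The main obstacle I expect is simply ensuring the counting estimate from Theorem \ref{entropy_approx} is applied to exactly the right sets: one must recognize that the points of $E_N \cap \sor(a)$ are a subset of $\varphi^{-N}(\overline{X^s(P,\ep)}) \cap \sor(a)$, so the entropy bound applies after replacing $X^s(P,\delta_0)$ in Theorem \ref{entropy_approx} by a neighborhood containing $\overline{X^s(P,\ep)}$ and $X^u(w,\delta_1)$ by a finite cover of the pre-compact set $\sor(a)$. Modulo this bookkeeping, the remainder is a routine geometric series argument.
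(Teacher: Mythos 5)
Your verification of the spectral triple conditions and your upper-bound estimate follow the paper's proof essentially verbatim: same decomposition over the sets $E_n \cap \sor(a)$, same use of parts (1)--(3) of Lemma \ref{omega_s_interaction} and part (3) of Lemma \ref{omega_s_properties}, same appeal to Theorem \ref{entropy_approx}, same geometric series. That half of the argument is correct.

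However, there is a genuine gap at the very last step, where you write that summability for every $p > \log_\lambda(e)\,h(X,\varphi)$ ``gives the stated spectral dimension.'' It does not: it only shows that the spectral dimension is \emph{at most} $\log_\lambda(e)\,h(X,\varphi)$. To conclude that the infimum $\inf\{s \st \tr(a(1+\D^2)^{-s/2}) < \infty\}$ actually \emph{equals} $\log_\lambda(e)\,h(X,\varphi)$ --- which is the substance of the theorem and the point advertised in the abstract, namely that the spectral dimension recovers the topological entropy --- you must also rule out summability for smaller exponents. The paper does this using the other half of the two-sided estimate coming from Theorem \ref{entropy_approx}: for every $\ep>0$ and all large $n$ one has $e^{n(h(X,\varphi)-\ep)} < \#\{x \st x \in E_n \cap \sor(a)\}$, and since each term of the trace over $E_n \cap \sor(a)$ is bounded \emph{below} by a constant times $\lambda^{-n s}$ (using $\omega_s(x) \le n+1$ on $E_n$ and the positivity of $a$), the series diverges whenever $s < \log_\lambda(e)h(X,\varphi) - \log_\lambda(e)\ep$. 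Only with both inequalities in hand does one obtain $\log_\lambda(e)h(X,\varphi) = \inf\{s \st \tr(a(1+\D^2)^{-s/2}) < \infty\}$. The missing lower bound is not difficult --- it is a mirror image of the computation you did --- but as written your argument establishes only one of the two inequalities needed.
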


\begin{proof}
We have shown that $(S,\h,\D)$ is a spectral triple. It remains to show that $(S,\h,\D)$ is summable. We must show that, for $a$ in $C_c(X^u(Q))$ a positive operator, we have 
\[ \tr(a(1+\D^2)^{-\frac{s}{2}}) < \infty \]
for some $s > 0$. By part ($1$) of Lemma \ref{omega_s_interaction}, there exists $M$ such that $E_m \cap \sor(a) = \emptyset$ for all $m \leq M$. Furthermore, by part ($2$) of Lemma \ref{omega_s_interaction}, the number of elements in $E_N \cap \sor(a)$ is finite for all $N$. Now using part ($3$) of Lemma \ref{omega_s_properties}, for $x \in E_N \cap \sor(a)$ we have
\begin{eqnarray}
\nonumber
\tr(a(1+\D^2)^{-\frac{s}{2}}) & = & \sum_{x \in X^h(P,Q)} \left< a(1+\D^2)^{-\frac{s}{2}}) \delta_x,\delta_x \right> \\
\nonumber
& = & \sum_{x \in \sor(a)} \frac{a(x,x)}{(1+\lambda^{2\omega_s(x)})^{s/2}} \\
\label{52}
& \leq & \sum_{n=M}^\infty \left( \sum_{\#\{x \st x \in E_n \cap \sor(a)\}} \frac{a(x,x)}{(1+\lambda^{2n})^{s/2}} \right).
\end{eqnarray}
Now from Theorem \ref{entropy_approx}, for $\ep > 0$, there exists $N$ such that for all $n \geq N$,
\begin{eqnarray}
\label{53}
e^{n(h(X,\varphi) - \ep)} < \#\{x \st x \in E_n \cap \sor(a)\} < e^{n(h(X,\varphi) + \ep)}.
\end{eqnarray}
Therefore, letting $A = \sup\{a(x,x) \st x \in \sor(a)\}$, we have
\begin{eqnarray*}
\sum_{\#\{x \st x \in E_n \cap \sor(a)\}} \frac{a(x,x)}{(1+\lambda^{2n})^{s/2}} & < & \frac{Ae^{n(h(X,\varphi) + \ep)}}{(1+\lambda^{2n})^{s/2}} \\
& < & \frac{Ae^{n(h(X,\varphi) + \ep)}}{(\lambda^{2n})^{s/2}} \\
& = & \frac{A\lambda^{\log_\lambda(e)n(h(X,\varphi) + \ep)}}{(\lambda^{sn})} \\
& = & A\lambda^{n(\log_\lambda(e)h(X,\varphi) + \log_\lambda(e)\ep)-sn} \\
& = & A\lambda^{n((\log_\lambda(e)h(X,\varphi) + \log_\lambda(e)\ep)-s)}
\end{eqnarray*}
Putting this into (\ref{52}) and letting $R$ denote the first $N-1$ terms of the sum yields
\begin{eqnarray*}
\tr(a(1+\D^2)^{-\frac{s}{2}}) & < & R + \sum_{n=M}^\infty A\lambda^{n((\log_\lambda(e)h(X,\varphi) + \log_\lambda(e)\ep)-s)},
\end{eqnarray*}
which converges geometrically for $s > \log_\lambda(e)h(X,\varphi) + \log_\lambda(e)\ep$. Since this holds for any $\ep > 0$ we have that
\[ \log_\lambda(e)h(X,\varphi) \geq \inf \{s \st \tr(a(1+\D^2)^{-\frac{s}{2}}) < \infty\}. \]
On the other hand, using the other inequality in (\ref{53}), a similar computation shows that
\begin{eqnarray*}
\tr(a(1+\D^2)^{-\frac{s}{2}}) > R + \sum_{n=M}^\infty \frac{\min\{a(x,x)\}\lambda^s}{2} \lambda^{n((\log_\lambda(e)h(X,\varphi) - \log_\lambda(e)\ep)-s)},
\end{eqnarray*}
which converges geometrically only if $s > \log_\lambda(e)h(X,\varphi) - \log_\lambda(e)\ep$ for every $\ep >0$. Therefore, we have
\[ \log_\lambda(e)h(X,\varphi) = \inf \{s \st \tr(a(1+\D^2)^{-\frac{s}{2}}) < \infty\}. \]
\end{proof}

To conclude, we note that it is not obvious that the operator $\D$ gives rise to a spectral triple on the stable Ruelle algebra $S \rtimes_{\alpha} \mathbb{Z}$. We would be very interested to know if it does.


\begin{thebibliography}{99}
\thispagestyle{headings}
\markright{}
 


\bibitem{BJ} S. Baaj and P. Julg, {\em Th\'{e}orie Bivariante de Kasparov et Op\'{e}rateurs Non Born\'{e}s dans les $C^\ast$-modules Hilbertiens}, C.R. Acad. Sci. Paris {\bf 296} (1983), 875-878.





\bibitem{BS} M. Brin and G. Stuck, {\em Introduction to Dynamical Systems}, Cambridge Univ. Press, Cambridge U.K., 2002.




\bibitem{CP} A. Carey and J. Phillips, {\em Unbounded Fredholm Modules and Spectral Flow}, Can. J. Math. {\bf 50} (1998), 673�718.

\bibitem{Con1} A. Connes, {\em Noncommutative Geometry}, Academic Press, London and San Diego, 1994.

\bibitem{Con2} A. Connes, {\em Compact Metric Spaces, Fredholm Modules, and Hyperfiniteness}, Ergod. Th. \& Dynam. Sys. {\bf 9} (1989), 207-220.

















\bibitem{HR} N. Higson and J. Roe, {\em Analytic K-Homology}, Oxford Univ. Press, New York, 2000.


\bibitem{Kas} G.G. Kasparov, {\em Equivariant $KK$-theory and the Novikov Conjecture}, Invent. Math. {\bf 91} (1988), 147-201.

\bibitem{KaPu1} J. Kaminker and I.F. Putnam, {\em $K$-Theoretic Duality for Shifts of Finite Type}, Comm. Math. Phys. {\bf 187} (1997), 509-522.

\bibitem{KPW} J. Kaminker, I.F. Putnam, and M.F. Whittaker {\em $K$-Theoretic Duality for Hyperbolic Dynamical Systems}, Math. ArXiv: 1009.4999 (2010), 509-522.


\bibitem{Kil} B. Killough, {\em Ring Structures on the $K$-Theory of $C^\ast$-Algebras Associated to Smale Spaces}, Ph.D. Thesis, Univ. of Victoria, 2009.


\bibitem{Men} L. Mendoza, {\em Topological Entropy of Homoclinic Closures}, Trans. Ame. Math. Soc., {\bf 311} (1989), 255-266.

\bibitem{MRW} P. S. Muhly, J. N. Renault and D. P. Williams, {\em Equivalence and isomorphism for groupoid $C^\ast$-algebras}, J. Operator Theory, {\bf 17} (1987), 3-22.

\bibitem{Pet} G.K. Pedersen, {\em Analysis Now}, Revised Printing, Springer Verlag, New York, 1989.


\bibitem{Put1} I.F. Putnam, {\em $C^\ast$-Algebras from Smale Spaces}, Canad. J. Math. {\bf 48} (1996), 175-195.

\bibitem{Put2} I.F. Putnam, {\em Smale Spaces and $C^\ast$-Algebras}, Lecture Notes, Univ. of Victoria, 2006.


\bibitem{PS} I.F. Putnam and J. Spielberg {\em The Structure of $C^\ast$-Algebras Associated with Hyperbolic Dynamical Systems}, J. Func. Analysis {\bf 163} (1999), 279-299.


\bibitem{Ren} J.N. Renault, \textit{A Groupoid Approach to $C^\ast$-algebras}, Lecture Notes in Math., vol. 793, Springer-Verlag, Berlin 1980.

\bibitem{Rennie} A. Rennie, {\em Summability for Nonunital Spectral Triples}, $K$-Theory {\bf 31} (2004), 71-100.

\bibitem{Ror} M. R\o{}rdam, F. Larsen, and N.J. Laustsen, {\em An Introduction to K-Theory for $C^\ast$-algebras}, London Math. Soc. Student Texts, vol 49, Cambridge Univ. Press, Cambridge, 2000.

\bibitem{Rue1} D. Ruelle, {\em Thermodynamic Formalism}, Second Ed., Cambridge Univ. Press, Cambridge, 2004.

\bibitem{Rue2} D. Ruelle, {\em Noncommutative Algebras for Hyperbolic Diffeomorphisms}, Invent. Math. {\bf 93} (1988), 1-13.


\bibitem{Sma} S. Smale, {\em Differentiable Dynamical Systems}, Bull. Amer. Math. Soc. {\bf 73} (1967), 747-817.



\bibitem{Whi} M.F. Whittaker, {\em Poincar\'{e} Duality and Spectral Triples for Hyperbolic Dynamical Systems}, Ph.D. Thesis, Univ. of Victoria, 2010.

\end{thebibliography}
\end{document}